\documentclass[12pt, reqno]{article}
\usepackage{enumerate,amsmath,amssymb,bm,ascmac,amsthm,url}
\usepackage{tikz}
\usepackage[margin=30truemm]{geometry}
\usepackage{here}

\theoremstyle{definition}
\newtheorem{thm}{Theorem}[section]
\newtheorem{thm*}{Theorem}

\newtheorem{defi*}{Definition}
\newtheorem{lem}[thm]{Lemma}
\newtheorem{lem*}{Lemma}
\newtheorem{pro}[thm]{Proposition}
\newtheorem{pro*}{Proposition}

\newcommand{\MC}[1]{\mathcal{#1}}
\newcommand{\MB}[1]{\mathbb{#1}}

\newcommand{\BM}[1]{{\bm #1}}

\newcommand{\G}{\Gamma}

\newcommand{\Span}[1]{\left \langle #1 \right \rangle}

\makeatletter

\@addtoreset{equation}{section}
\makeatother

\title
{
Perfect state transfer in Grover walks between states associated to vertices of a graph
}

\author{Sho Kubota\thanks{
Department of Applied Mathematics, Faculty of Engineering, Yokohama National University,
Hodogaya, Yokohama 240-8501, Japan. \texttt{kubota-sho-bp@ynu.ac.jp}
}
\and
Etsuo Segawa\thanks{
Graduate School of Environment Information Sciences, Yokohama National University,
Hodogaya, Yokohama, 240-8501, Japan. \texttt{segawa-etsuo-tb@ynu.ac.jp}
}
}

\date{}

\begin{document}
\maketitle
\begin{abstract}
We study perfect state transfer in Grover walks,
which are typical discrete-time quantum walk models.
In particular, we focus on states associated to vertices of a graph.
We call such states vertex type states. 
Perfect state transfer between vertex type states can be studied via Chebyshev polynomials.
We derive a necessary condition on eigenvalues of a graph
for perfect state transfer between vertex type states to occur.
In addition,
we perfectly determine the complete multipartite graphs whose partite sets are the same size on which perfect state transfer occurs between vertex type states,
together with the time.
\vspace{8pt} \\
{\it Keywords:} perfect state transfer, Chebyshev polynomial, Grover walk \\
{\it MSC 2020 subject classifications:} 05C50; 81Q99
\end{abstract}

\section{Introduction}
Quantum state transfer is an important task in quantum information processing.
Quantum key distribution is one of its applications \cite{BB, E}.
A series of quantum search algorithms can be seen as another application,
which is state transfer from a uniform state to a desired state.
If state transfer is realized with probability 1,
it is said to be ``perfect".
Perfect state transfer has been extensively studied
in both physics \cite{Kay} and mathematics \cite{G2012, G2012-2}.
In mathematics,
it has been studied in continuous-time quantum walks for a relatively long time.
We refer \cite{CS, S} for example.
Although not as numerous as in continuous-time quantum walks, we can find studies of perfect state transfer also in discrete-time quantum walks \cite{BPAK, KT, KW, LCETK, SS2016, SS2017, YG, Z2019, ZQBLX}.
These studies are briefly summarized in Section~1 of \cite{Z2019}.
Recently,
state transfer in discrete-time quantum walks has been studied
also from the viewpoint of algebraic graph theory.
Zhan \cite{Z2019} has constructed an infinite family of 4-regular circulant graphs
on which perfect state transfer occurs.
Chan and Zhan \cite{CZ} have established the theory for pretty good state transfer in discrete-time quantum walks.

In this paper, we consider perfect state transfer in Grover walks,
which are typical discrete-time quantum walk models.
In particular, we focus on perfect state transfer between states associated to vertices of a graph.
We call such states {\it vertex type states}.
We provide a necessary condition on eigenvalues of a graph such that perfect state transfer occurs.
There are two main theorems in this paper.
One is a general fact on arbitrary graphs,
and the other is a precise result on specific graphs.
We first state these theorems.
Readers who want detailed terminologies and definitions can find them
in later sections.
The first main theorem is a condition on eigenvalues of a graph
for perfect state transfer to occur between vertex type states.

\begin{thm}
{\it
Let $\G$ be a graph with the discriminant $P$,
and let $x,y$ be vertices of $\G$.
If perfect state transfer occurs from $d^*\BM{e}_x$ to $d^*\BM{e}_y$ at time $\tau$,
then $T_{\tau}(\lambda) = \pm 1$ holds for any $\lambda \in \Theta_{P}(\BM{e}_x)$,
where $d$ is the boundary matrix,
$T_n(x)$ is the Chebyshev polynomial of the first kind,
and $\Theta_{P}(\BM{e}_x)$ is the eigenvalue support of the unit vector $\BM{e}_x$
with respect to $P$.
}
\end{thm}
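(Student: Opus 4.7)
\emph{Proof plan.} The strategy combines the spectral decomposition of the discriminant $P$ with the spectral mapping theorem for Grover walks. I would first decompose $\BM{e}_x = \sum_{\lambda \in \Spec(P)} E_\lambda \BM{e}_x$, where $E_\lambda$ is the orthogonal projector onto the $\lambda$-eigenspace of $P$; recall that $\Theta_P(\BM{e}_x)$ is precisely the set of $\lambda$ with $E_\lambda \BM{e}_x \ne 0$. The key structural input is that, for each $\lambda = \cos\theta \in \Spec(P) \cap (-1,1)$ and each eigenvector $f$ of $P$ with eigenvalue $\lambda$, the vectors $d^* f$ and $(U - \lambda I) d^* f$ are nonzero, orthogonal, and span a two-dimensional $U$-invariant subspace on which $U$ acts as rotation by $\theta$; moreover the invariant subspaces attached to distinct eigenvalues of $P$ are mutually orthogonal. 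From this rotation picture one obtains the Chebyshev identity
\[
U^n d^* f \;=\; T_n(\lambda)\, d^* f \;+\; U_{n-1}(\lambda)\,(U - \lambda I)\, d^* f,
\]
where $T_n$ and $U_n$ are the Chebyshev polynomials of the first and second kinds.

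Next I apply this to the hypothesis. Perfect state transfer reads $U^\tau d^* \BM{e}_x = \gamma\, d^* \BM{e}_y$ for some $\gamma \in \mathbb{C}$ with $|\gamma| = 1$. Since $U$ preserves each two-dimensional invariant subspace and these are pairwise orthogonal, projecting onto the subspace attached to $\lambda$ yields the component equation
\[
U^\tau d^* E_\lambda \BM{e}_x \;=\; \gamma\, d^* E_\lambda \BM{e}_y.
\]
The right-hand side lies in the direction of $d^*$ applied to the $\lambda$-eigenspace of $P$, whereas the Chebyshev identity decomposes the left-hand side as the sum of a component in that same direction (with coefficient $T_\tau(\lambda)$) and a component orthogonal to it inside the 2D subspace (with coefficient $U_{\tau-1}(\lambda)$). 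Matching orthogonal parts forces $U_{\tau-1}(\lambda)\,(U - \lambda I)\, d^* E_\lambda \BM{e}_x = 0$, and since $(U - \lambda I)\, d^* E_\lambda \BM{e}_x$ is nonzero whenever $\lambda \in \Theta_P(\BM{e}_x) \cap (-1, 1)$, we conclude $U_{\tau-1}(\lambda) = 0$. Writing $\lambda = \cos\theta$, this reads $\sin(\tau\theta) = 0$, whence $T_\tau(\lambda) = \cos(\tau\theta) = \pm 1$. For $\lambda \in \{-1, +1\}$ the conclusion $T_\tau(\lambda) = (\pm 1)^\tau \in \{-1, +1\}$ is automatic.

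The main obstacle is the careful invocation of the spectral mapping theorem under the normalization conventions fixed earlier in the paper. In particular, verifying the orthogonality $\langle d^* f,\, (U - \lambda I) d^* f\rangle = 0$ and the identification of the rotation angle as $\theta = \arccos\lambda$ reduces to the defining identity relating $d U d^*$ to $P$ and $d d^*$, which must be stated and applied precisely. A minor secondary point is the higher-multiplicity case: if $\dim V_\lambda > 1$, one either chooses an orthonormal basis of $V_\lambda$ and runs the argument entrywise, or works with the block $U$-invariant subspace $d^* V_\lambda \oplus (U - \lambda I) d^* V_\lambda$. Neither presents serious difficulty once the paper's standing setup is in place.
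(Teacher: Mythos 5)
Your proposal is correct, but it takes a genuinely different route from the paper. The paper never invokes the two-dimensional $U$-invariant subspaces or the second-kind Chebyshev polynomials: it first proves the compressed-evolution identity $dU^{\tau}d^{*}=T_{\tau}(P)$ by induction (using $dU^{\tau}d^{*}=dU^{\tau-1}Sd^{*}$ and the three-term recursion for $T_n$), then shows $\lVert T_{\tau}(P)\BM{e}_x\rVert\le 1$ with equality precisely when $T_{\tau}(\lambda)=\pm1$ on the eigenvalue support, and finally sandwiches $1=\lvert\Span{U^{\tau}d^{*}\BM{e}_x,d^{*}\BM{e}_y}\rvert=\lvert\Span{T_{\tau}(P)\BM{e}_x,\BM{e}_y}\rvert\le\lVert T_{\tau}(P)\BM{e}_x\rVert\le 1$ via Cauchy--Schwarz, so all inequalities are tight. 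Your argument instead builds the full spectral-mapping picture: the decomposition $U^{\tau}d^{*}f=T_{\tau}(\lambda)d^{*}f+U_{\tau-1}(\lambda)(U-\lambda I)d^{*}f$, the mutual orthogonality of the blocks, and the component-matching step are all valid (the orthogonality $\Span{d^{*}g,(U-\lambda I)d^{*}f}=0$ for $f,g$ in the $\lambda$-eigenspace and the nonvanishing of $(U-\lambda I)d^{*}f$ for $\lambda\in(-1,1)$ both reduce to $dd^{*}=I$ and $dSd^{*}=P$ exactly as you indicate, and $U_{\tau-1}(\lambda)=0$ is equivalent to $T_{\tau}(\lambda)=\pm1$ for $\lambda\in(-1,1)$, with $\lambda=\pm1$ trivial). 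The trade-off: the paper's route is shorter and needs nothing beyond the compression $dU^{\tau}d^{*}$, whereas yours requires setting up the invariant-subspace machinery but extracts strictly more information per eigenvalue, namely the componentwise relation $T_{\tau}(\lambda)E_{\lambda}\BM{e}_x=\gamma E_{\lambda}\BM{e}_y$, which could subsequently help identify the target vertex and the phase $\gamma$.
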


The second main theorem is a necessary and sufficient condition for perfect state transfer to occur
on the complete $r$-partite graph $\G_{r,m}$ whose partite sets have the same size $m$.
See the beginning of Section~\ref{0813-3} for the notation of vertices in $\G_{r,m}$.

\begin{thm}
{\it
If perfect state transfer occurs on $\G_{r,m}$
from $d^*\BM{e}_x$ to $d^*\BM{e}_y$ for some vertices $x,y$ in $\G_{r,m}$,
then $r = 2,3$.
Moreover, we have the following:
\begin{enumerate}[(i)]
\item Perfect state transfer occurs
from $d^* \BM{e}_{v_{1}^{(1)}}$ to another vertex type state on $\G_{2,m}$
at time $\tau \in \{1,2,3\}$
if and only if $m=2$ and $\tau = 2$.
In this case,
the state $d^* \BM{e}_{v_{1}^{(1)}}$ transfers to $d^* \BM{e}_{v_{2}^{(1)}}$.
\item Perfect state transfer occurs
from $d^* \BM{e}_{v_{1}^{(1)}}$ to another vertex type state on $\G_{3,m}$
at time $\tau \in \{1,2, \dots, 11\}$
if and only if $m=2$ and $\tau = 6$.
In this case,
the state $d^* \BM{e}_{v_{1}^{(1)}}$ transfers to $d^* \BM{e}_{v_{2}^{(1)}}$.
\end{enumerate}
}
\end{thm}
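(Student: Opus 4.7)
The plan is to combine Theorem~1 with the very sparse spectrum of the discriminant $P$ of $\G_{r,m}$: each of the three (or two) eigenvalues in $\Theta_P(\BM{e}_x)$ must lie in the finite set of points where $T_\tau$ equals $\pm 1$, which collapses the admissible $(r,\tau)$ to a handful of cases that can then be verified by hand.

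First I would determine the spectrum of $P$. Since $\G_{r,m}$ is $(r-1)m$-regular, $P$ is a scalar multiple of its adjacency matrix and its eigenvalues are $1$, $0$, and $-1/(r-1)$ with multiplicities $1$, $r(m-1)$, and $r-1$ respectively. Writing down the associated spectral idempotents shows that $\BM{e}_x$ has nonzero projection onto each eigenspace when $m \ge 2$, so $\Theta_P(\BM{e}_x) = \tyuukakko{1,\, 0,\, -1/(r-1)}$; when $m = 1$ the middle eigenvalue drops out and $\Theta_P(\BM{e}_x) = \tyuukakko{1,\, -1/(r-1)}$.

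Next I would invoke Theorem~1. The condition $T_\tau(\lambda) = \pm 1$ for every $\lambda \in \Theta_P(\BM{e}_x)$ is equivalent, via $T_\tau(\cos\theta) = \cos(\tau\theta)$, to $\lambda = \cos(k\pi/\tau)$ for some integer $k$. Hence each $\lambda$ in the support is both rational and a cosine of a rational multiple of $\pi$, and Niven's theorem forces $\lambda \in \tyuukakko{0, \pm 1/2, \pm 1}$. Applied to $\lambda = -1/(r-1)$ this rules out $r \ge 4$ and leaves only $r = 2$ (with $\lambda = -1$) and $r = 3$ (with $\lambda = -1/2$). The remaining eigenvalues are then controlled by $T_\tau(-1) = (-1)^\tau$, $T_\tau(0) = \cos(\tau\pi/2)$, and $T_\tau(-1/2) = \cos(2\tau\pi/3)$: for $r = 2$ with $m \ge 2$ we need $\tau$ even, so $\tau \in \{1,2,3\}$ forces $\tau = 2$; for $r = 3$ with $m \ge 2$ we need $\tau$ both even and divisible by $3$, so $\tau \in \{1,\dots,11\}$ forces $\tau = 6$.

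To pin down $m$ and the target vertex $y$, I would first exploit the large automorphism group of $\G_{r,m}$. The stabilizer of $v_1^{(1)}$ must fix $y$ whenever PST sends $d^*\BM{e}_{v_1^{(1)}}$ to $d^*\BM{e}_y$; for $m \ge 3$ the only fixed vertex is $v_1^{(1)}$ itself, precluding transfer to \emph{another} vertex type state, while for $m = 2$ the only nontrivial fixed vertex is $v_2^{(1)}$. The remaining and main computational obstacle is to verify that PST does in fact occur between $d^*\BM{e}_{v_1^{(1)}}$ and $d^*\BM{e}_{v_2^{(1)}}$ at the indicated times on $\G_{2,2}$ and $\G_{3,2}$; for this I would use the spectral decomposition of the Grover operator via $P$ to express the amplitude $\langle d^*\BM{e}_{v_2^{(1)}},\, U^\tau d^*\BM{e}_{v_1^{(1)}} \rangle$ as a short sum of Chebyshev values at the three eigenvalues weighted by the idempotent entries $\langle \BM{e}_{v_2^{(1)}}, E_\lambda \BM{e}_{v_1^{(1)}} \rangle$, and check directly that its modulus is $1$. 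The small cases $m = 1$ (the graphs $K_2$ and $K_3$) are handled separately by a direct Grover-walk computation, confirming that no PST occurs within the stipulated time windows.
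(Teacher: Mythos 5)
Your proposal is correct, and its first half --- computing $\sigma(P) = \{1, 0, -\tfrac{1}{r-1}\}$, showing the eigenvalue support of every vertex is the full spectrum, and then combining Theorem~\ref{0724-3} with Niven's theorem (the paper's Lemmas~\ref{0724-4} and~\ref{0724-5}) to force $r \in \{2,3\}$, $\tau = 2$ for $r=2$ and $\tau = 6$ for $r=3$ --- is exactly the paper's argument. Where you genuinely diverge is in pinning down $m$ and the target vertex. The paper does this by a single direct computation (its Lemma~\ref{0730-1}): at the admissible times one has $T_\tau(P) = I \otimes (F_1 - F_2)$, and the entry $\left| \Span{ (I \otimes (F_1 - F_2))\BM{e}_{v_1^{(1)}}, \BM{e}_{v_i^{(j)}} } \right| = \left| \delta_{1,j}\marukakko{\tfrac{2}{m} - \delta_{1,i}} \right|$ equals $1$ precisely when $j=1$, $i=2$, $m=2$; this one calculation delivers the necessity of $m=2$, the identity of the target, and (essentially) the sufficiency all at once. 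You instead use the stabilizer of $v_1^{(1)}$ in $\Aut(\G_{r,m})$ to argue that the target must be a fixed vertex of the stabilizer, which rules out $m \ge 3$ and singles out $v_2^{(1)}$ when $m=2$. That argument is valid and conceptually appealing (it explains \emph{why} only the ``twin'' vertex can receive the state), but it costs you an extra lemma the paper never needs --- that a graph automorphism induces an arc permutation $Q$ with $QU = UQ$ and $Q d^*\BM{e}_x = d^*\BM{e}_{g(x)}$ --- and it still leaves you with the same final verification on $\G_{2,2}$ and $\G_{3,2}$ that the paper's computation gives for free. Net effect: same skeleton, with the paper trading your symmetry argument for a two-line inner-product evaluation that is both shorter and self-contained.
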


There is a remark in the above theorem.
We restrict times to consider,
but this does not diminish worth of the theorem.
This is because both $\G_{2,m}$ and $\G_{3,m}$ are periodic graphs \cite{HKSS2017},
i.e., $U(\G_{2,m})^4 = I$ and $U(\G_{3,m})^{12} = I$,
where $U(\G)$ is the time evolution matrix of a graph $\G$.
Thus, it suffices to study only times less than their periods.

This paper is organized as follows.
Section~\ref{0813-1} is a preliminary part.
We define terminologies associated to graphs and Grover walks.
We then define perfect state transfer and
discuss a motivation for considering vertex type states.
In Section~\ref{0813-2}, we derive the first main theorem.
Perfect state transfer between vertex type states is related to the Chebyshev polynomials.
The connection between the both is the main subject.
After the derivation of the first main theorem,
we provide its useful application.
This is also related to the study of periodicity of quantum walks.
In Section~\ref{0813-3}, we discuss perfect state transfer on complete multipartite graphs whose partite sets are the same size.
We show that if perfect state transfer occurs between vertex type states on the graphs, it is only in complete bipartite graphs or complete tripartite graphs.

\section{Preliminaries} \label{0813-1}
See \cite{GR} for basic terminologies related to graphs.
Let $\G =(V, E)$ be a finite simple and connected graph with the vertex set $V$ and the edge set $E$.
Define $\MC{A} = \MC{A}(\G)=\{ (x, y), (y, x) \mid \{x, y\} \in E \}$,
which is the set of the symmetric arcs of $\G$.
The origin and terminus of $a=(x, y) \in \MC{A}$ are denoted by $o(a), t(a)$, respectively.
We write the inverse arc of $a$ as $a^{-1}$.

\subsection{Grover walks and related matrices}
Let $\G = (V, E)$ be a graph.
We define several matrices on Grover walks.
The {\it boundary matrix} $d = d(\G) \in \MB{C}^{V \times \MC{A}}$ is defined by
$d_{x,a} = \frac{1}{\sqrt{\deg x}} \delta_{x, t(a)}$,
where $\delta_{a,b}$ is the Kronecker delta.
Note that
\begin{equation} \label{0803-2}
dd^* = I,
\end{equation}
where $I$ is the identity matrix.
The {\it shift matrix} $S = S(\G) \in \MB{C}^{\MC{A} \times \MC{A}}$
is defined by $S_{a, b} = \delta_{a,b^{-1}}$.
Clearly,
\begin{equation} \label{0820-1}
S^2 = I.
\end{equation}
Define the {\it time evolution matrix} $U = U(\G) \in \MB{C}^{\MC{A} \times \MC{A}}$
by $U = S(2d^*d-I)$.
Quantum walks defined by $U$ is called {\it Grover walks}.
The {\it discriminant} $P=P(\G) \in \MB{C}^{V \times V}$ is defined by $P = dSd^*$.

The discriminant is strongly related to the adjacency matrix of a graph,
and facilitates analysis of Grover walks.
The {\it adjacency matrix} $A=A(\G) \in \MB{C}^{V \times V}$ of a graph $\G = (V, E)$ is defined by
\[ A_{x,y} = \begin{cases}
1 \qquad &\text{$\{ x,y \} \in E$,} \\
0 \qquad &\text{otherwise.}
\end{cases} \]
Direct calculation leads to the following.

\begin{lem}
{\it
Let $\G$ be a $k$-regular graph,
and let $A$ and $P$ be the adjacency matrix and the discriminant of $\G$, respectively.
Then we have $P = \frac{1}{k}A$.
Therefore, the absolute values of eigenvalues of $P$ does not exceed $1$.
}
\end{lem}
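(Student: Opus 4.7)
The plan is to prove the identity $P = \tfrac{1}{k}A$ by direct expansion of $P = dSd^*$ entry by entry, and then derive the eigenvalue bound as an immediate corollary using the Perron--Frobenius bound for $k$-regular graphs.

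First I would fix vertices $x, y \in V$ and compute the $(x,y)$-entry of $P$. Expanding,
\[
(dSd^*)_{x,y} = \sum_{a, b \in \MC{A}} d_{x,a}\, S_{a,b}\, \overline{d_{y,b}}.
\]
The matrix $d$ is real, so the conjugate is harmless. Inserting the explicit formulas $d_{x,a} = \frac{1}{\sqrt{\deg x}} \delta_{x,t(a)}$ and $S_{a,b} = \delta_{a,b^{-1}}$, and using $k$-regularity so that $\deg x = \deg y = k$, the sum over $b$ collapses via $b = a^{-1}$, leaving
\[
(dSd^*)_{x,y} = \frac{1}{k} \sum_{a \in \MC{A}} \delta_{x,t(a)} \delta_{y, t(a^{-1})} = \frac{1}{k} \sum_{a \in \MC{A}} \delta_{x,t(a)} \delta_{y,o(a)}.
\]
The summand is $1$ exactly when $a = (y,x)$ is a symmetric arc, i.e.\ when $\{x,y\} \in E$, and $0$ otherwise. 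Thus $(dSd^*)_{x,y} = \frac{1}{k}A_{x,y}$, which gives $P = \frac{1}{k}A$.

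For the eigenvalue bound I would invoke the standard fact that for a $k$-regular graph every eigenvalue $\mu$ of $A$ satisfies $|\mu| \le k$ (e.g.\ by Perron--Frobenius, or from $\|A\|_\infty = k$ together with symmetry so that the spectral radius equals the operator norm). Dividing by $k$, every eigenvalue $\lambda$ of $P$ satisfies $|\lambda| \le 1$.

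I do not anticipate any real obstacle here; the only minor bookkeeping is keeping the origin/terminus convention for arcs consistent in the $S$-sum so that one correctly identifies the nonzero terms with edges of $\G$. Everything else is a one-line invocation of a textbook spectral fact.
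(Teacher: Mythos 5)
Your proof is correct and matches the paper's intent: the paper states ``Direct calculation leads to the following'' and omits the computation, deferring to a more general result in a cited reference, while you simply carry out that direct entrywise calculation of $dSd^*$ together with the standard spectral bound for $k$-regular graphs. No issues.
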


We omit a proof.
A more general claim and its proof can be found
in Theorem~3.1 of \cite{KST} and Proposition~3.3 of \cite{KST}.

Since the discriminant $P$ is a normal matrix,
it has a spectral decomposition.
Let $\lambda_1, \dots, \lambda_s$ be the distinct eigenvalues of $P$,
and let $E_i$ be the projection matrices associated to the eigenvalue $\lambda_i$.
Then we have
$P = \sum_{i=1}^{s} \lambda_i E_i$,
and the projection matrices satisfy
\begin{equation} \label{0804-2}
E_i E_j = \delta_{i,j} E_i,
\end{equation}
\begin{equation} \label{0806-2}
E_i^* = E_i,
\end{equation}
and
\begin{equation} \label{0806-3}
\sum_{i=1}^s E_i = I.
\end{equation}
See \cite{M} for the spectral decomposition of matrices.
In this study, we use polynomials of $P$.
The following can be obtained by using Equality~(\ref{0804-2}).

\begin{lem} \label{0806-1}
{\it
Let $M$ be a normal matrix with the spectral decomposition $M = \sum_{i=1}^{s} \lambda_i E_i$,
and let $p(x)$ be a polynomial.
Then we have $p(M) = \sum_{i=1}^{s} p(\lambda_i) E_i$.
}
\end{lem}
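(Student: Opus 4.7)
The plan is to reduce the statement to the case of monomials $p(x) = x^n$ by linearity, and then establish the monomial case by induction on $n$. The whole content of the lemma is encoded in the orthogonality relation (\ref{0804-2}), which guarantees that cross terms $E_i E_j$ with $i \neq j$ vanish in any product of spectral projections, together with the completeness relation (\ref{0806-3}) to handle the constant term.

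First I would handle the base cases of the induction. For $n = 0$, the identity $M^0 = I$ matches $\sum_{i=1}^{s} \lambda_i^0 E_i$ by (\ref{0806-3}), interpreting $\lambda_i^0 = 1$. For $n = 1$, the claim is precisely the given spectral decomposition. For the inductive step, I would write $M^{n+1} = M \cdot M^n$, substitute the spectral decomposition of $M$ and the inductive hypothesis, and collapse the resulting double sum
\[
\left( \sum_{j=1}^{s} \lambda_j E_j \right) \left( \sum_{i=1}^{s} \lambda_i^n E_i \right) = \sum_{i,j=1}^{s} \lambda_j \lambda_i^n E_j E_i
\]
using (\ref{0804-2}), which kills all off-diagonal contributions and leaves $\sum_{i=1}^{s} \lambda_i^{n+1} E_i$.

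Once the monomial case is established, the extension to a general polynomial $p(x) = \sum_{n=0}^{N} c_n x^n$ is pure linearity: swapping the finite sums over $n$ and $i$ produces $\sum_{n=0}^{N} c_n \lambda_i^n = p(\lambda_i)$ as the coefficient of each $E_i$, yielding $p(M) = \sum_{i=1}^{s} p(\lambda_i) E_i$.

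There is no real obstacle here; the argument is bookkeeping on top of the orthogonality and completeness of the spectral projectors. The only point worth noting is that normality of $M$ enters only through the existence of the spectral decomposition with mutually orthogonal projections — exactly the structure already recorded in (\ref{0804-2})–(\ref{0806-3}) — so no further property of $M$ is invoked in the proof.
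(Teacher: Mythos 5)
Your proof is correct and follows exactly the route the paper intends: the paper gives no written proof, only the remark that the lemma ``can be obtained by using Equality~(\ref{0804-2})'', and your induction on monomials via the orthogonality relation (\ref{0804-2}) plus the completeness relation (\ref{0806-3}) for the constant term is the standard argument being alluded to.
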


\subsection{Perfect state transfer}
Let $\G$ be a graph,
and let $U = U(\G)$ be the time evolution matrix.
A vector $\Phi \in \MB{C}^{\MC{A}}$ is a {\it state} if $|| \Phi || = 1$.
We say that {\it perfect state transfer} occurs from a state $\Phi$ to a state $\Psi$ at time $\tau$
if there exists $\gamma \in \MB{C}$ with norm one such that $U^{\tau}\Phi = \gamma \Psi$.
As mentioned in Section~4 of \cite{CZ},
the occurrence of perfect state transfer can be restated as follows.

\begin{lem}[\cite{CZ}] \label{0820-2}
{\it
Let $\G$ be a graph,
and let $U = U(\G)$ be the time evolution matrix.
Perfect state transfer occurs from a state $\Phi$ to a state $\Psi$ at time $\tau$
if and only if $|\Span{ U^{\tau}\Phi, \Psi }| = 1$.
}
\end{lem}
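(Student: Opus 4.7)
The plan is to treat this equivalence as a direct appeal to the equality case of the Cauchy--Schwarz inequality, once we know that the time evolution $U$ is unitary. The forward implication is a one-line computation, and the converse uses that equality in Cauchy--Schwarz forces proportionality of the two unit vectors involved.

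First I would record unitarity of $U$. By Equality~(\ref{0803-2}) we have $dd^* = I$, so the Grover coin $C := 2d^*d - I$ satisfies $C^* = C$ and $C^2 = I$; combined with $S^* = S$ and $S^2 = I$ from (\ref{0820-1}), the product $U = SC$ is unitary. In particular $\|U^\tau \Phi\| = \|\Phi\| = 1$ for every state $\Phi$ and every $\tau$, so that both $U^\tau\Phi$ and $\Psi$ are unit vectors.

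For the forward direction, assume perfect state transfer occurs, so $U^\tau \Phi = \gamma \Psi$ with $|\gamma| = 1$. Then $\Span{U^\tau \Phi, \Psi} = \gamma \Span{\Psi,\Psi} = \gamma$, hence $|\Span{U^\tau\Phi, \Psi}| = 1$. For the converse, suppose $|\Span{U^\tau\Phi, \Psi}| = 1$. The Cauchy--Schwarz inequality gives $|\Span{U^\tau\Phi, \Psi}| \le \|U^\tau \Phi\|\,\|\Psi\| = 1$, with equality if and only if $U^\tau\Phi$ and $\Psi$ are linearly dependent. The equality case therefore yields a scalar $\gamma \in \MB{C}$ with $U^\tau\Phi = \gamma \Psi$, and taking norms of both sides forces $|\gamma| = 1$, matching the definition of perfect state transfer.

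There is no real obstacle here; the only point requiring care is the unitarity of $U$, without which the upper bound of $1$ in Cauchy--Schwarz would not apply and $U^\tau \Phi$ would not automatically be a unit vector. Once this is noted, the lemma reduces to standard Hilbert-space facts and does not depend on any graph-theoretic structure.
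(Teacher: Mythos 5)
Your proof is correct. Note that the paper itself offers no proof of this lemma at all: it is quoted from Section~4 of the Chan--Zhan reference \cite{CZ} and used as a black box, so there is no ``paper's approach'' to compare against line by line. Your argument supplies exactly the standard justification one would expect: unitarity of $U = S(2d^*d - I)$ follows from $dd^*=I$ and $S^2=I$ (so $\|U^\tau\Phi\| = 1$), the forward direction is the one-line computation $\Span{U^\tau\Phi,\Psi} = \gamma\Span{\Psi,\Psi}$ up to a harmless conjugate depending on the inner-product convention, and the converse is the equality case of Cauchy--Schwarz, which forces $U^\tau\Phi = \gamma\Psi$ with $|\gamma|=1$ after taking norms. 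This is also precisely the shape of the estimate the authors exploit later in the proof of Theorem~\ref{0724-3}, where they chain $|\Span{U^\tau d^*\BM{e}_x, d^*\BM{e}_y}| \le \|T_\tau(P)\BM{e}_x\|\cdot\|\BM{e}_y\| \le 1$ and use tightness; so your write-up is consistent with, and makes explicit, the mechanism the paper relies on implicitly.
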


If no particular restriction is given to states,
we can find various perfect state transfer, including trivial cases.
For example,
if a state $\Phi$ is an eigenvector of $U$ associated to an eigenvalue $\lambda$,
then we have $U \Phi = \lambda \Phi$.
Mathematically, perfect state transfer certainly occurs from $\Phi$ to $\Phi$ at time $1$,
but it would not make sense for such a state $\Phi$.
Moreover, the state does not change.
On the other hand, perfect state transfer always occurs
when $\deg t(a) \in \{1,2\}$ for some $a \in \MC{A}$ in Grover walks.
We can find its explanation in Lemma~6.1 of \cite{KSeYa}.
This is also perfect state transfer between different states, but it is still trivial.
From these points of view,
we should put some restrictions on states,
and the restricted states should be meaningful.
Let us consider states associated to vertices of a graph.

Let $\G$ be a graph with the vertex set $V$.
A state $\Phi$ is said to be {\it vertex type}
if there exists a vertex $x \in V$ such that $\Phi = d^* \BM{e}_{x}$,
where $\BM{e}_{x} \in \MB{C}^{V}$ is the unit vector defined by $(\BM{e}_{x})_z = \delta_{x,z}$.
We denote by $\MC{X}$ the set of all vertex type states
i.e., $\MC{X} = \{ d^*\BM{e}_{x} \mid x \in V \}$.
A vertex type state represents a state that has values only on arcs reaching toward some vertex.
Indeed,
letting $\BM{e}_{a} \in \MB{C}^{\MC{A}}$ be the unit vector defined by $(\BM{e}_{a})_z = \delta_{a,z}$,
we have
\[ d^* \BM{e}_{x} = \frac{1}{\sqrt{\deg x}}\sum_{\substack{a \in \MC{A} \\ t(a) = x}} \BM{e}_{a}. \]
This can be visually represented as shown in Figure~\ref{0907-1}.
A vertex type state corresponds to the situation that
a quantum walker is only on a particular vertex
in the model where a walker moves on vertices.
We study perfect state transfer between states in $\MC{X}$.

\begin{figure}[ht]
\begin{center}
\begin{tikzpicture}
[scale = 0.7,
line width = 0.8pt,
v/.style = {circle, fill = black, inner sep = 0.8mm},u/.style = {circle, fill = white, inner sep = 0.1mm}]
  \node[v] (1) at (2, 0) {};
  \node[v] (2) at (0, 2) {};
  \node[v] (3) at (-2, 0) {};
  \node[v] (4) at (0, 4.6) {};
  \node[u] (10) at (0.4, 2.4) {$x$};
  \node[u] (1012) at (1.6, 1.65) {\textcolor{blue}{$\frac{1}{\sqrt{3}}$}};
  \node[u] (1032) at (-1.7, 1.65) {\textcolor{blue}{$\frac{1}{\sqrt{3}}$}};
  \node[u] (1032) at (-0.6, 3.3) {\textcolor{blue}{$\frac{1}{\sqrt{3}}$}};
  \draw[-] (1) to (2);
  \draw[-] (2) to (3);
  \draw[-] (4) to (2);  
  \node[u] (11) at (2, 0.3) {};
  \node[u] (22) at (0.3, 2) {};
  \draw[draw= blue,->] (11) to (22);
  \node[u] (23) at (-0.3, 2) {};
  \node[u] (33) at (-2, 0.3) {};
  \draw[draw= blue,->] (33) to (23);
  \node[u] (a) at (-0.15, 4.4) {};
  \node[u] (b) at (-0.15, 2.2) {};
  \draw[draw= blue,->] (a) to (b);
\end{tikzpicture}
\caption{A vertex type state $d^* \BM{e}_x$ with $\deg x = 3$} \label{0907-1}
\end{center}
\end{figure}
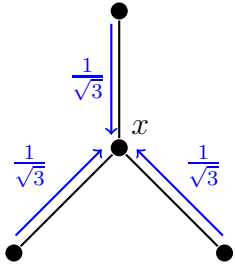

\section{Chebyshev polynomials of the first kind} \label{0813-2}

In Grover walks,
perfect state transfer between vertex type states and Chebyshev polynomials are
closely related to each other.
We first introduce the polynomials.
The {\it Chebyshev polynomial of the first kind}, denoted by $T_n(x)$,
is the polynomial defined by $T_0(x) = 1$, $T_1(x) = x$ and
$T_{n}(x) = 2xT_{n-1}(x) - T_{n-2}(x)$ for $n \geq 2$.
It is well-known that 
\begin{equation} \label{0802-1}
T_n(\cos \theta) = \cos (n \theta).
\end{equation}
This implies
\begin{equation} \label{0723-4}
|T_n(x)| \leq 1
\end{equation}
for $|x| \leq 1$.

\begin{lem} \label{0723-5}
{\it
Let $\G$ be a graph with the time evolution matrix $U$ and the discriminant $P$.
Then we have $d U^{\tau} d^{*} = T_{\tau}(P)$ for $\tau \in \MB{N} \cup \{0\}$.
}
\end{lem}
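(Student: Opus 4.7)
The plan is to prove the identity by induction on $\tau$, using the Chebyshev recurrence $T_\tau(x) = 2xT_{\tau-1}(x) - T_{\tau-2}(x)$. The base cases are straightforward: for $\tau = 0$, the identity $dU^0 d^* = dd^* = I = T_0(P)$ is immediate from (\ref{0803-2}); for $\tau = 1$, expanding $U = S(2d^*d - I)$ gives
\[ dUd^* = 2\,dSd^*dd^* - dSd^* = 2P \cdot I - P = P = T_1(P), \]
again using (\ref{0803-2}) together with the definition $P = dSd^*$.

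The inductive step amounts to establishing the operator-level recurrence
\[ dU^\tau d^* = 2P\,(dU^{\tau-1}d^*) - dU^{\tau-2}d^*, \]
which, via Lemma~\ref{0806-1}, matches the Chebyshev recurrence on eigenvalues of $P$. To derive it, I would split off one $U$ on the left: writing $dU = 2dSd^*d - dS = 2Pd - dS$, I get
\[ dU^\tau d^* = 2P\,(dU^{\tau-1}d^*) - dSU^{\tau-1}d^*. \]
It then remains to show $dSU^{\tau-1}d^* = dU^{\tau-2}d^*$. This is where the two key identities $S^2 = I$ from (\ref{0820-1}) and $dd^* = I$ from (\ref{0803-2}) cooperate: since $SU = S \cdot S(2d^*d - I) = 2d^*d - I$ by $S^2 = I$, we have $SU^{\tau-1} = (2d^*d - I)U^{\tau-2}$, and multiplying by $d$ on the left,
\[ dSU^{\tau-1}d^* = (2\,dd^*d - d)U^{\tau-2}d^* = (2d - d)U^{\tau-2}d^* = dU^{\tau-2}d^*, \]
as required.

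The only real obstacle is spotting the right algebraic manipulation; the calculation itself is short once one realizes that the $S^2 = I$ relation should be used to collapse the two shift matrices appearing inside $SU = S \cdot S(2d^*d-I)$, after which $dd^* = I$ produces the cancellation that eliminates the extra $d^*d$ factor. Combining the base cases with the recurrence yields $dU^\tau d^* = T_\tau(P)$ for every $\tau \in \mathbb{N} \cup \{0\}$ by induction.
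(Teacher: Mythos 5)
Your proof is correct and takes essentially the same approach as the paper: induction on $\tau$ via the Chebyshev recurrence, with the base cases handled by $dd^* = I$ and the recurrence step established through the same two identities $dd^* = I$ and $S^2 = I$. The only cosmetic difference is that you peel the factor $U$ off the left end of $dU^\tau d^*$ (proving $dSU^{\tau-1}d^* = dU^{\tau-2}d^*$), whereas the paper peels it off the right end (proving $dU^\tau d^* = dU^{\tau-1}Sd^*$); the two manipulations are mirror images of one another.
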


\begin{proof}
We first claim that
\begin{equation} \label{0723-1}
d U^{\tau} d^{*} = d U^{\tau - 1} S d^{*}
\end{equation}
for $\tau \geq 1$.
Indeed,
\begin{align*}
d U^{\tau} d^{*}
&= d U^{\tau-1} S (2d^* d - I) d^{*} \\
&= 2 d U^{\tau-1} S d^* d d^* -  d U^{\tau-1} S d^{*} \\
&= 2 d U^{\tau-1} S d^* -  d U^{\tau-1} S d^{*} \tag{by~(\ref{0803-2})} \\
&= d U^{\tau-1} S d^{*}.
\end{align*}

We next show the statement by induction on $\tau$.
Equality~(\ref{0803-2}) derives $d U^{0} d^{*} = I = T_{0}(P)$.
Equality~(\ref{0723-1}) leads to $d U d^{*} = d S d^{*} = P = T_{1}(P)$.
Assume that the claim holds up to $\tau - 1$ for $\tau \geq 2$.
We have
\begin{align*}
d U^{\tau} d^{*} &= d U^{\tau - 1} S d^{*} \tag{by (\ref{0723-1})} \\
&= d U^{\tau - 2} \{S (2 d^* d - I) \} S d^{*} \\
&= 2 d U^{\tau - 2} S d^* P - d U^{\tau - 2} d^* \tag{by (\ref{0820-1})} \\
&= 2 d U^{\tau - 1} d^* P - T_{\tau - 2}(P) \tag{by (\ref{0723-1})} \\
&= 2P T_{\tau - 1}(P) - T_{\tau - 2}(P) \\
&= T_{\tau}(P).
\end{align*}
The desired equality is obtained for any $\tau \in \MB{N} \cup \{0\}$.
\end{proof}

Let $M \in \MB{C}^{n \times n}$ be a normal matrix
with the spectral decomposition $M = \sum_{i=1}^{s} \lambda_i E_i$.
We denote by $\sigma(M)$ the set of the distinct eigenvalues of $M$.
Remark that $\sigma(M)$ is simply a set, so the multiplicities of the eigenvalues are ignored.
For a vector $x \in \MB{C}^n$, we define $\Theta_M(x) = \{ \lambda_i \in \sigma(M) \mid E_i x \neq 0 \}$.
This is called the {\it eigenvalue support} of the vector $x$ with respect to $M$.

\begin{lem} \label{0723-6}
{\it
Let $\G$ be a graph with the discriminant $P$.
For a vertex $x \in V(\G)$ and $\tau \in \MB{N} \cup \{0\}$,
we have $|| T_{\tau}(P) \BM{e}_x || \leq 1$.
The equality holds if and only if $T_{\tau}(\lambda) = \pm 1$ for any $\lambda \in \Theta_P(\BM{e}_x)$.
}
\end{lem}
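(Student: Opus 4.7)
The plan is to use the spectral decomposition of the discriminant $P$ to reduce the norm computation to a weighted sum over its eigenvalues, and then extract both the inequality and its equality case from the standard bound $|T_\tau(\lambda)|\le 1$ on $[-1,1]$.

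First I would write $P = \sum_{i=1}^s \lambda_i E_i$ and invoke Lemma~\ref{0806-1} to obtain $T_\tau(P) = \sum_{i=1}^s T_\tau(\lambda_i) E_i$. Applying this to $\BM{e}_x$ gives $T_\tau(P)\BM{e}_x = \sum_i T_\tau(\lambda_i) E_i \BM{e}_x$. Since the $E_i$ are mutually orthogonal self-adjoint projections by (\ref{0804-2}) and (\ref{0806-2}), the Pythagorean identity yields
\begin{equation*}
\| T_\tau(P) \BM{e}_x \|^2 = \sum_{i=1}^{s} |T_\tau(\lambda_i)|^2 \, \| E_i \BM{e}_x \|^2,
\end{equation*}
while (\ref{0806-3}) together with the same orthogonality gives $\sum_i \| E_i \BM{e}_x \|^2 = \| \BM{e}_x \|^2 = 1$.

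Next I would combine these two identities with two facts about the eigenvalues $\lambda_i$ of $P$: they are real (since $P = dSd^*$ is real symmetric), and they lie in $[-1,1]$ (by the preceding regular-graph lemma, or more generally by the KST references cited right after it). Thus by (\ref{0723-4}) each $|T_\tau(\lambda_i)|\le 1$, which immediately gives the norm bound $\|T_\tau(P)\BM{e}_x\|^2 \le \sum_i \|E_i\BM{e}_x\|^2 = 1$.

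For the equality statement, the strict inequality above is forced unless $|T_\tau(\lambda_i)| = 1$ for every index $i$ with $E_i \BM{e}_x \neq 0$, i.e.\ for every $\lambda_i \in \Theta_P(\BM{e}_x)$. The final step is to observe that because each $\lambda_i$ is real and $T_\tau$ has real coefficients, $T_\tau(\lambda_i)$ is real, so $|T_\tau(\lambda_i)|=1$ if and only if $T_\tau(\lambda_i)=\pm 1$. I do not anticipate a real obstacle; the only mild subtlety is to make sure the eigenvalue bound $|\lambda_i|\le 1$ is applied in sufficient generality, since the lemma stated explicitly in the text addresses only the regular case.
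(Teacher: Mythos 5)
Your proposal is correct and follows essentially the same route as the paper's proof: spectral decomposition of $P$, Lemma~\ref{0806-1}, orthogonality of the projections to get the Pythagorean expansion, the bound $|T_\tau(\lambda)|\le 1$ from (\ref{0723-4}), and the same analysis of the equality case via the eigenvalue support. Your added care about justifying $|\lambda|\le 1$ beyond the regular case is a reasonable (and slightly more explicit) touch, but it does not change the argument.
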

\begin{proof}
Let the spectral decomposition of $P$ be
\[ P = \sum_{\lambda \in \sigma(P)} \lambda E_{\lambda}. \]
We note that $\sigma(P) \subset \MB{R}$ since $P$ is a symmetric matrix.
We have
\begin{align*}
|| T_{\tau}(P) \BM{e}_x ||^2
&= \Span{ T_{\tau}(P) \BM{e}_x, T_{\tau}(P) \BM{e}_x } \\
&= \Span{\sum_{\lambda \in \sigma(P)}T_{\tau}(\lambda) E_{\lambda} \BM{e}_x, \sum_{\lambda \in \sigma(P)}T_{\tau}(\lambda) E_{\lambda} \BM{e}_x} \tag{by Lemma~\ref{0806-1}} \\
&= \sum_{\lambda \in \sigma(P)} \Span{T_{\tau}(\lambda) E_{\lambda} \BM{e}_x, T_{\tau}(\lambda) E_{\lambda} \BM{e}_x} \tag{by (\ref{0806-2}) and (\ref{0804-2})} \\
&= \sum_{\lambda \in \sigma(P)} T_{\tau}(\lambda)^2 \Span{E_{\lambda} \BM{e}_x, E_{\lambda} \BM{e}_x} \\
&\leq \sum_{\lambda \in \sigma(P)} \Span{E_{\lambda} \BM{e}_x, E_{\lambda} \BM{e}_x} \tag{by (\ref{0723-4})}\\
&= \Span{\BM{e}_x, \sum_{\lambda \in \sigma(P)} E_{\lambda} \BM{e}_x} \tag{by (\ref{0806-2}) and (\ref{0804-2})}\\
&= \Span{\BM{e}_x, \BM{e}_x} \tag{by (\ref{0806-3})} \\
&= 1.
\end{align*}
The equality holds if and only if $T_{\tau}(\lambda)^2 \Span{E_{\lambda} \BM{e}_x, E_{\lambda} \BM{e}_x} = \Span{E_{\lambda} \BM{e}_x, E_{\lambda} \BM{e}_x}$ for any $\lambda \in \sigma(P)$.
This is equivalent to the condition that $T_{\tau}(\lambda) = \pm 1$ for any $\lambda \in \Theta_P(\BM{e}_x)$
because $E_{\lambda} \BM{e}_x = 0$ for $\lambda \in \sigma(P) \setminus \Theta_P(\BM{e}_x)$.
\end{proof}

These two lemmas lead to a useful necessary condition for perfect state transfer to occur.

\begin{thm} \label{0724-3}
{\it
Let $\G$ be a graph with the discriminant $P$,
and let $x,y$ be vertices of $\G$.
If perfect state transfer occurs from $d^*\BM{e}_x$ to $d^*\BM{e}_y$ at time $\tau$,
then $T_{\tau}(\lambda) = \pm 1$ holds for any $\lambda \in \Theta_{P}(\BM{e}_x)$.
}
\end{thm}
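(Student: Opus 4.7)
The plan is to combine Lemma~\ref{0820-2}, Lemma~\ref{0723-5}, and Lemma~\ref{0723-6}, using Cauchy--Schwarz as the bridge that forces the norm bound in Lemma~\ref{0723-6} to be saturated.

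First I would translate the perfect state transfer hypothesis into an inner product statement. By Lemma~\ref{0820-2}, perfect state transfer from $d^*\BM{e}_x$ to $d^*\BM{e}_y$ at time $\tau$ is equivalent to
\[
\left| \Span{U^{\tau} d^* \BM{e}_x,\, d^* \BM{e}_y} \right| = 1.
\]
Next I would push $d^*$ across the inner product: using the adjoint and Lemma~\ref{0723-5},
\[
\Span{U^{\tau} d^* \BM{e}_x,\, d^* \BM{e}_y} = \Span{d\, U^{\tau} d^* \BM{e}_x,\, \BM{e}_y} = \Span{T_{\tau}(P) \BM{e}_x,\, \BM{e}_y}.
\]

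Now I would apply Cauchy--Schwarz and use $\|\BM{e}_y\| = 1$ to obtain
\[
1 = \left| \Span{T_{\tau}(P) \BM{e}_x,\, \BM{e}_y} \right| \leq \|T_{\tau}(P) \BM{e}_x\| \cdot \|\BM{e}_y\| = \|T_{\tau}(P) \BM{e}_x\|.
\]
Combined with the upper bound $\|T_{\tau}(P) \BM{e}_x\| \leq 1$ from Lemma~\ref{0723-6}, this forces the equality $\|T_{\tau}(P) \BM{e}_x\| = 1$. Invoking the characterization of the equality case in Lemma~\ref{0723-6} then yields $T_{\tau}(\lambda) = \pm 1$ for every $\lambda \in \Theta_{P}(\BM{e}_x)$, which is the conclusion.

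The argument is essentially an assembly of the preceding lemmas, so I do not expect any real obstacle; the only subtlety worth checking is that the step $\Span{U^{\tau} d^* \BM{e}_x, d^* \BM{e}_y} = \Span{d\, U^{\tau} d^* \BM{e}_x, \BM{e}_y}$ is just the definition of the adjoint of $d^*$, which is $d$, and the step $dU^\tau d^* = T_\tau(P)$ is already the content of Lemma~\ref{0723-5}. The key conceptual move is recognizing that Cauchy--Schwarz converts the ``inner product equals one'' condition into a ``norm equals one'' condition, which is precisely the regime where Lemma~\ref{0723-6} gives spectral information.
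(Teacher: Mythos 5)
Your proposal is correct and follows essentially the same route as the paper: both arguments chain Lemma~\ref{0820-2}, the adjoint identity $\Span{U^{\tau} d^* \BM{e}_x, d^* \BM{e}_y} = \Span{dU^{\tau} d^* \BM{e}_x, \BM{e}_y}$, Lemma~\ref{0723-5}, and Cauchy--Schwarz to force $\|T_{\tau}(P)\BM{e}_x\| = 1$, then apply the equality case of Lemma~\ref{0723-6}. No gaps.
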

\begin{proof}
Indeed, if perfect state transfer occurs, then
\begin{align*}
1 &=  \left | \Span{ U^{\tau} d^*\BM{e}_x, d^*\BM{e}_y} \right | \tag{by Lemma~\ref{0820-2}} \\
&= \left | \Span{ dU^{\tau} d^*\BM{e}_x, \BM{e}_y} \right | \\
&= \left | \Span{ T_{\tau}(P) \BM{e}_x, \BM{e}_y}\right | \tag{by Lemma~\ref{0723-5}} \\
&\leq ||T_{\tau}(P) \BM{e}_x|| \cdot ||\BM{e}_y|| \\
&\leq 1. \tag{by Lemma~\ref{0723-6}}
\end{align*}
Thus, all inequalities are tight.
In particular, we have $||T_{\tau}(P) \BM{e}_x|| = 1$.
From Lemma~\ref{0723-6},
we have $T_{\tau}(\lambda) = \pm 1$ for any $\lambda \in \Theta_{P}(\BM{e}_x)$.
\end{proof}

The above theorem provides information on eigenvalues for perfect state transfer to occur.
In particular, when the discriminant has a rational eigenvalue,
then the eigenvalue is quite restricted.
The following are lemmas to describe this.

\begin{lem} \label{0724-4}
{\it
We have
\[ \bigcup_{\tau \in \MB{N}} \{ x \in [-1,1] \mid T_{\tau}(x) = \pm 1 \} \subset \cos \MB{Q} \pi. \]
}
\end{lem}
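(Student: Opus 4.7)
The plan is to parametrize any $x \in [-1,1]$ as $x = \cos\theta$ with $\theta \in [0,\pi]$, and then use the defining identity $T_\tau(\cos\theta) = \cos(\tau\theta)$ recorded in equation~(\ref{0802-1}) to reduce the claim to a direct trigonometric calculation. First I would fix $x \in [-1,1]$ satisfying $T_\tau(x) = \pm 1$ for some $\tau \in \MB{N}$. Because cosine restricted to $[0,\pi]$ is a bijection onto $[-1,1]$, there exists a unique $\theta \in [0,\pi]$ with $x = \cos\theta$.

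Next, applying~(\ref{0802-1}) the hypothesis $T_\tau(x) = \pm 1$ rewrites as $\cos(\tau\theta) = \pm 1$, which forces $\tau\theta \in \pi \MB{Z}$; that is, $\tau\theta = k\pi$ for some $k \in \MB{Z}$. Dividing by $\tau$ yields $\theta = (k/\tau)\pi$, so $\theta/\pi \in \MB{Q}$, and consequently $x = \cos\theta \in \cos \MB{Q}\pi$. Since $\tau$ was an arbitrary element of $\MB{N}$, taking the union over $\tau$ establishes the claimed inclusion.

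There is no real obstacle here: once we invoke~(\ref{0802-1}), the proof is essentially the one-line observation that the only solutions of $\cos \alpha = \pm 1$ are $\alpha \in \pi \MB{Z}$, and dividing by the positive integer $\tau$ preserves rationality of $\alpha/\pi$. The slight care needed is merely to note that the parametrization $x = \cos\theta$ on $[0,\pi]$ ensures we are working with a genuine $\theta$ rather than an arbitrary complex quantity, which is automatic from $|x| \le 1$.
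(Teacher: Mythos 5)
Your proof is correct and follows essentially the same route as the paper's: write $x=\cos\theta$ with $\theta\in[0,\pi]$, apply $T_\tau(\cos\theta)=\cos(\tau\theta)$ from~(\ref{0802-1}), and conclude $\tau\theta\in\pi\MB{Z}$, hence $\theta\in\MB{Q}\pi$. No gaps; nothing further is needed.
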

\begin{proof}
Take an element $x$ from the left hand side.
There exists $\tau \in \MB{N}$ such that $T_{\tau}(x) =\pm 1$.
We can put $x=\cos \theta$ for some $\theta \in [0, \pi]$ since $x \in [-1,1]$.
Thus we have $\cos (\tau \theta) = \pm 1$.
This is equivalent to $\theta \in \{ \frac{j}{\tau}\pi \mid j \in \{0,1,\dots, \tau\} \} \subset \MB{Q}\pi$.
\end{proof}

A complex number $\alpha$ is said to be an {\it algebraic integer}
if there exists a monic polynomial $p(x) \in \MB{Z}[x]$ such that $p(\alpha) = 0$.
Let $\Omega$ denote the set of algebraic integers.
We provide some basic facts on the algebraic integers.

\begin{pro}[\cite{J, L}] \label{0727-1}
{\it
We have the following.
\begin{enumerate}[(i)]
\item $\Omega$ is a subring of $\MB{C}$; and
\item $\Omega \cap \MB{Q} = \MB{Z}$.
\end{enumerate}
}
\end{pro}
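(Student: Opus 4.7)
The plan is to prove (i) and (ii) separately, using a module-theoretic characterization of algebraic integers for (i) and the rational root theorem for (ii).

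For (ii), which is the easier part, I would start with the observation that if $\alpha \in \MB{Q} \cap \Omega$, we may write $\alpha = p/q$ with $p,q \in \MB{Z}$ and $\gcd(p,q) = 1$. By assumption there is a monic polynomial $x^n + a_{n-1}x^{n-1} + \dots + a_0 \in \MB{Z}[x]$ vanishing at $\alpha$. Multiplying the equation $\alpha^n + \dots + a_0 = 0$ by $q^n$ yields $p^n + a_{n-1} p^{n-1} q + \dots + a_0 q^n = 0$, from which $q$ divides $p^n$. Combined with $\gcd(p,q) = 1$ this forces $q = \pm 1$, so $\alpha \in \MB{Z}$. The reverse inclusion $\MB{Z} \subset \Omega \cap \MB{Q}$ is immediate since each $n \in \MB{Z}$ is a root of $x - n$.

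For (i), the main step is to establish the following equivalent criterion: a complex number $\alpha$ is an algebraic integer if and only if there exists a finitely generated nonzero $\MB{Z}$-submodule $M \subset \MB{C}$ such that $\alpha M \subset M$. The forward direction is straightforward: take $M = \MB{Z}[\alpha]$, which is generated by $1, \alpha, \dots, \alpha^{n-1}$ when $\alpha$ satisfies a monic integer polynomial of degree $n$. The reverse direction is the substantive part: if $M$ is generated by $v_1, \dots, v_m$ and $\alpha v_i = \sum_j c_{ij} v_j$ with $c_{ij} \in \MB{Z}$, then $\alpha$ is an eigenvalue of the integer matrix $C = (c_{ij})$, hence a root of its characteristic polynomial $\det(xI - C)$, which is monic with integer coefficients.

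Given the criterion, closure under ring operations follows cleanly. For $\alpha, \beta \in \Omega$, let $M$ be the $\MB{Z}$-module spanned by the finite set $\{\alpha^i \beta^j : 0 \leq i < \deg \alpha,\ 0 \leq j < \deg \beta\}$. It is finitely generated and nonzero, and a short check shows that multiplication by $\alpha + \beta$, by $\alpha - \beta$, and by $\alpha\beta$ all preserve $M$. Applying the criterion in its reverse direction to each of these three elements shows that $\Omega$ is closed under addition, subtraction, and multiplication. Since $0,1 \in \Omega$ trivially, this exhibits $\Omega$ as a subring of $\MB{C}$.

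The hardest step, and the only one with real content, is the eigenvalue/characteristic-polynomial argument establishing the module criterion; the remainder is bookkeeping. Note that this proposition is standard and cited from \cite{J, L}, so a full proof is not required here; the sketch above is how I would reconstruct it.
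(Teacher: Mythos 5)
The paper gives no proof of this proposition; it is quoted as a standard fact from the cited references \cite{J, L}. Your reconstruction is correct and is exactly the classical argument those sources give: the rational root theorem for $\Omega \cap \MB{Q} = \MB{Z}$, and the finitely generated $\MB{Z}$-module criterion (with the characteristic-polynomial/adjugate trick for the reverse implication) to show $\Omega$ is closed under $+$, $-$, and $\times$. No gaps.
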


\begin{lem} \label{0724-5}
{\it
We have $\cos \MB{Q}\pi \cap \MB{Q} = \{ \pm 1, \pm \frac{1}{2}, 0 \}$.
}
\end{lem}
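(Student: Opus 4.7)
The plan is to reduce the lemma to a statement about algebraic integers by showing that $2\cos\theta$ is an algebraic integer whenever $\theta \in \MB{Q}\pi$, then invoke Proposition~\ref{0727-1}.

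First, the inclusion $\{\pm 1, \pm \frac{1}{2}, 0\} \subset \cos\MB{Q}\pi \cap \MB{Q}$ is immediate from $\cos 0 = 1$, $\cos \pi = -1$, $\cos \frac{\pi}{3} = \frac{1}{2}$, $\cos \frac{2\pi}{3} = -\frac{1}{2}$, and $\cos \frac{\pi}{2} = 0$, so only the reverse inclusion requires work. I would take $x \in \cos\MB{Q}\pi \cap \MB{Q}$ and write $x = \cos\frac{p}{q}\pi$ with $p, q \in \MB{Z}$, $q \geq 1$.

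The key step is to show $2x \in \Omega$. I would use the Chebyshev polynomial machinery already available in this section: from $T_n(\cos\theta) = \cos(n\theta)$ and the recursion in the definition, an easy induction shows that $2T_n(y/2)$ is a \emph{monic} polynomial in $y$ with integer coefficients for every $n \geq 1$ (e.g.\ $2T_2(y/2) = y^2 - 2$, $2T_3(y/2) = y^3 - 3y$, and the Chebyshev recursion passes to this normalization). Setting $y = 2x$ and $n = q$, one gets
\[
2T_q(x) = 2\cos(p\pi) = 2(-1)^p,
\]
so $y = 2x$ is a root of the monic integer polynomial $2T_q(z/2) - 2(-1)^p \in \MB{Z}[z]$. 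Hence $2x \in \Omega$.

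Now the punchline is short: since $x \in \MB{Q}$, also $2x \in \MB{Q}$, and Proposition~\ref{0727-1}(ii) gives $2x \in \Omega \cap \MB{Q} = \MB{Z}$. Combined with $|x| \leq 1$ (because $x = \cos\theta$), we get $2x \in \{-2,-1,0,1,2\}$, i.e.\ $x \in \{\pm 1, \pm \frac{1}{2}, 0\}$, as desired. The only mildly delicate point is the claim that $2T_n(y/2) \in \MB{Z}[y]$ is monic; I do not expect any real obstacle, but it should be verified carefully by induction before the main reduction.
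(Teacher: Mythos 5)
Your proof is correct, but it reaches the key fact $2x \in \Omega$ by a different route than the paper. The paper writes the angle as $\frac{2p}{q}\pi$, sets $\zeta = e^{\frac{2p}{q}\pi i}$, observes that $\zeta$ and $\zeta^{-1}$ are roots of $z^q - 1$ and hence algebraic integers, and then uses Proposition~\ref{0727-1}~(i) (that $\Omega$ is a ring) to conclude $2\cos\frac{2p}{q}\pi = \zeta + \zeta^{-1} \in \Omega$; you instead exhibit an explicit monic integer polynomial vanishing at $2x$, namely $D_q(z) - 2(-1)^p$ where $D_n(z) = 2T_n(z/2)$, so that $2x \in \Omega$ follows directly from the definition of algebraic integer. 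Your normalization claim is sound: $D_0 = 2$, $D_1 = z$, and the Chebyshev recursion becomes $D_n = zD_{n-1} - D_{n-2}$, so $D_n$ is monic of degree $n$ with integer coefficients for $n \geq 1$. Both arguments then finish identically with Proposition~\ref{0727-1}~(ii). What your version buys is that it bypasses the ring structure of $\Omega$ entirely (only part (ii), essentially the rational root theorem, is needed) and stays within the Chebyshev machinery the section has already set up; what the paper's version buys is brevity, since given Proposition~\ref{0727-1}~(i) there is nothing to verify by induction.
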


\begin{proof}
It is clear that the right hand side is included in the left hand side.
We show the reverse inclusion.
Without loss of generality,
we take $\cos \frac{2p}{q} \pi \in  \cos \MB{Q}\pi \cap \MB{Q}$,
where $p,q \in \MB{Z}$.
Let $\zeta = e^{\frac{2p}{q} \pi i}$.
Since $\zeta^q = \zeta^{-q} = 1$, we have $\zeta, \zeta^{-1} \in \Omega$.
By the assumption and Proposition~\ref{0727-1}~(i),
we have $\Omega \ni \zeta + \zeta^{-1} = 2 \cos \frac{2p}{q} \in \MB{Q}$.
Proposition~\ref{0727-1}~(ii) derives $2 \cos \frac{2p}{q} \in \Omega \cap \MB{Q} = \MB{Z}$.
Therefore, we have $\cos \frac{2p}{q}\pi \in \{ \pm 1, \pm \frac{1}{2}, 0 \}$.
\end{proof}

For example,
we consider a situation that a rational eigenvalue $\lambda$ of the discriminant
is in the eigenvalue support of some vertex type state.
If perfect state transfer occurs between vertex type states,
then the eigenvalue $\lambda$ has to be $\pm 1, \pm \frac{1}{2}$ or $0$.
Indeed,
Theorem~\ref{0724-3} and Lemma~\ref{0724-4} yield that
\[ \MB{Q} \ni \lambda \in \bigcup_{\tau \in \MB{N}} \{ x \in [-1,1] \mid T_{\tau}(x) = \pm 1 \} \subset \cos \MB{Q} \pi, \]
and hence Lemma~\ref{0724-5} derives
$\lambda \in \cos \MB{Q}\pi \cap \MB{Q} = \{ \pm 1, \pm \frac{1}{2}, 0 \}$.

We note that a similar claim was provided in the context of periodicity of Grover walks.
In the proof of Theorem~1.2 of \cite{Y2019},
Yoshie claimed that if the discriminant has a rational eigenvalue,
then it must be one of the five listed above.
We also find approaches that
rational eigenvalues restrict candidates of periodic graphs in \cite{HKSS2017}.
Considerations of quantum walks with algebraic integers were also given in \cite{KKKS, K, SMA}.

Using the facts presented in this section,
we determine perfect state transfer between vertex type states on the complete multipartite graphs.

\section{Perfect state transfer on the complete multipartite graphs} \label{0813-3}

Let $r, m \geq 2$ and let $V_1, V_2, \dots, V_r$ be sets of $m$ elements
such that $V_i \cap V_j = \emptyset$ for distinct $i,j \in \{1, \dots, r\}$.
Define the graph $\G_{r,m}$ by the vertex set
\[ V(\G_{r,m}) = \bigcup_{i=1}^r V_i, \]
where $x \in V_i$ and $y \in V_j$ are adjacent if and only if $i \neq j$.
The graph $\G_{r,m}$ is a kind of complete multipartite graph,
sometimes written as $K_{m, \dots, m}$, where $m$ appears $r$ times.
In other words, the graph $\G_{r,m}$ is the complement of $r K_m$.
We display as $V_j = \{ v_1^{(j)}, v_2^{(j)}, \dots, v_m^{(j)} \}$ for $j \in \{1,2,\dots, r\}$.
Note that the adjacency matrix of $\G_{r,m}$ is $A(\G_{r,m}) = A(K_r) \otimes J_m$,
where $J_{m}$ is the all-ones matrix of size $m$.
Since $\G_{r,m}$ is $m(r-1)$-regular,
we have $P(\G_{r,m}) = \frac{1}{m(r-1)}A(\G_{r,m}) = (\frac{1}{r-1}A(K_r)) \otimes (\frac{1}{m} J_m)$.
The spectral decompositions of $\frac{1}{r-1}A(K_r)$ and $\frac{1}{m} J_m$ are respectively
\begin{align}
\frac{1}{r-1}A(K_r) &= 1 \cdot E_1 + \left(- \frac{1}{r-1} \right) E_2, \label{0723-2} \\
\frac{1}{m} J_m &= 1 \cdot F_1 + 0 \cdot F_2, \label{0723-3}
\end{align}
where $E_1 = \frac{1}{r} J_r$, $E_2 = I_r - \frac{1}{r} J_r $,
$F_1 = \frac{1}{m} J_m$ and $F_2 = I_m - \frac{1}{m} J_m$.

\begin{lem} \label{0724-2}
{\it
Let $P = P(\G_{r,m})$.
Then we have $\Theta_P(\BM{e}_{v_{i}^{(j)}}) = \sigma(P)$ for any vertex $v_{i}^{(j)}$ in $\G_{r,m}$.
}
\end{lem}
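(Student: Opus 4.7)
The plan is to exploit the tensor-product structure $P = \bigl(\frac{1}{r-1}A(K_r)\bigr) \otimes \bigl(\frac{1}{m}J_m\bigr)$ together with the two spectral decompositions in (\ref{0723-2}) and (\ref{0723-3}). Forming the tensor product of those decompositions gives
\[
P = 1 \cdot (E_1 \otimes F_1) + \left(-\frac{1}{r-1}\right)(E_2 \otimes F_1) + 0 \cdot (I_r \otimes F_2),
\]
where the last projection is obtained by collecting the two terms whose eigenvalue is $0$, namely $E_1 \otimes F_2 + E_2 \otimes F_2 = I_r \otimes F_2$. Since $1$, $-\frac{1}{r-1}$ and $0$ are pairwise distinct for every $r \geq 2$ (noting $-\frac{1}{r-1} = -1$ when $r = 2$), this is the spectral decomposition of $P$ and $\sigma(P) = \{1, -\tfrac{1}{r-1}, 0\}$ with the three projections $E_1 \otimes F_1$, $E_2 \otimes F_1$, $I_r \otimes F_2$.

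Next I would identify $V(\G_{r,m})$ with $\{1,\dots,r\} \times \{1,\dots,m\}$ via $v_i^{(j)} \leftrightarrow (j,i)$, so that $\BM{e}_{v_i^{(j)}} = \BM{e}_j \otimes \BM{e}_i$ in $\MB{C}^r \otimes \MB{C}^m$. Then each projection applied to this vector factors as a tensor product, and I would verify non-vanishing factor-by-factor:
\begin{align*}
(E_1 \otimes F_1)(\BM{e}_j \otimes \BM{e}_i) &= \tfrac{1}{r}\BM{1}_r \otimes \tfrac{1}{m}\BM{1}_m, \\
(E_2 \otimes F_1)(\BM{e}_j \otimes \BM{e}_i) &= \bigl(\BM{e}_j - \tfrac{1}{r}\BM{1}_r\bigr) \otimes \tfrac{1}{m}\BM{1}_m, \\
(I_r \otimes F_2)(\BM{e}_j \otimes \BM{e}_i) &= \BM{e}_j \otimes \bigl(\BM{e}_i - \tfrac{1}{m}\BM{1}_m\bigr).
\end{align*}
Each is visibly nonzero because $r, m \geq 2$ force $\BM{e}_j - \tfrac{1}{r}\BM{1}_r \neq 0$ and $\BM{e}_i - \tfrac{1}{m}\BM{1}_m \neq 0$.

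Hence every element of $\sigma(P)$ lies in $\Theta_P(\BM{e}_{v_i^{(j)}})$, and since the reverse inclusion is automatic, equality follows. There is no real obstacle here — the argument is a direct calculation once the tensor structure of the projections is set up; the only mild care is in collecting the two terms carrying eigenvalue $0$ into a single projection and in checking that the three eigenvalues remain distinct in all admissible ranges of $r$ and $m$.
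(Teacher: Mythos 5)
Your proposal is correct and takes essentially the same approach as the paper: both use the tensor-product spectral decomposition $P = 1\cdot(E_1\otimes F_1) + (-\tfrac{1}{r-1})(E_2\otimes F_1) + 0\cdot(I\otimes F_2)$ and check that each projection applied to $\BM{e}_j\otimes\BM{e}_i$ is nonzero. You merely spell out details the paper leaves implicit (collecting the two eigenvalue-$0$ terms into $I_r\otimes F_2$, the distinctness of the three eigenvalues, and the explicit factored images), all of which check out.
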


\begin{proof}
It is sufficient to show that $\Theta_P(\BM{e}_{v_{i}^{(j)}}) \supset \sigma(P)$.
By (\ref{0723-2}) and (\ref{0723-3}),
we have
\begin{equation} \label{0724-1}
P = 1 \cdot (E_1 \otimes F_1) + \left(- \frac{1}{r-1} \right) (E_2 \otimes F_1) + 0 \cdot (I \otimes F_2)
\end{equation}
as the spectral decomposition of $P$.
Since $\BM{e}_{v_{i}^{(j)}} = \BM{e}_j \otimes \BM{e}_i$,
we have $(E_1 \otimes F_1)(\BM{e}_j \otimes \BM{e}_i) \neq 0$,
$(E_2 \otimes F_1)(\BM{e}_j \otimes \BM{e}_i) \neq 0$
and $(I \otimes F_2)(\BM{e}_j \otimes \BM{e}_i) \neq 0$.
This implies the statement.
\end{proof}

First, we show that $r = 2,3$ if perfect state transfer occurs between vertex type states on $\G_{r,m}$.

\begin{lem} \label{0803-1}
{\it
If perfect state transfer occurs on $\G_{r,m}$
from $d^*\BM{e}_x$ to $d^*\BM{e}_y$ for some vertices $x,y$ in $\G_{r,m}$,
then $r = 2,3$.
}
\end{lem}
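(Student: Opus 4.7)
The plan is to combine Theorem~\ref{0724-3} with Lemma~\ref{0724-2} and then exploit the rationality obstruction coming from Lemmas~\ref{0724-4} and~\ref{0724-5}.

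First I would read off the spectrum of $P = P(\G_{r,m})$ directly from the spectral decomposition~(\ref{0724-1}): it has three distinct eigenvalues, namely $1$, $-\frac{1}{r-1}$ and $0$, so $\sigma(P) = \{1, -\tfrac{1}{r-1}, 0\}$. By Lemma~\ref{0724-2}, for every vertex $x$ in $\G_{r,m}$ the eigenvalue support $\Theta_P(\BM{e}_x)$ is all of $\sigma(P)$. Therefore, if perfect state transfer occurs from $d^*\BM{e}_x$ to $d^*\BM{e}_y$ at some time $\tau$, Theorem~\ref{0724-3} forces
\[ T_\tau(1) = \pm 1, \qquad T_\tau(0) = \pm 1, \qquad T_\tau\!\left(-\tfrac{1}{r-1}\right) = \pm 1. \]
The first equality is automatic, and the second simply constrains the parity of $\tau$; the third is the one that restricts $r$.

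The key observation is that $-\frac{1}{r-1}$ is a rational number in $[-1,1]$, so it falls under the rational obstruction discussed right after Lemma~\ref{0724-5}. Explicitly, Lemma~\ref{0724-4} yields
\[ -\tfrac{1}{r-1} \in \bigcup_{\tau \in \MB{N}} \{ x \in [-1,1] \mid T_\tau(x) = \pm 1 \} \subset \cos \MB{Q} \pi, \]
and combined with Lemma~\ref{0724-5} this gives $-\tfrac{1}{r-1} \in \cos \MB{Q}\pi \cap \MB{Q} = \{\pm 1, \pm \tfrac{1}{2}, 0 \}$.

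Finally I would solve this in $r$: since $r \geq 2$, the value $-\frac{1}{r-1}$ is strictly negative, and among $\{\pm 1, \pm \tfrac{1}{2}, 0\}$ only $-1$ and $-\tfrac{1}{2}$ are available, giving $r = 2$ and $r = 3$ respectively. There is no real obstacle here once one has identified the eigenvalues; the only thing to be careful about is to verify via~(\ref{0724-1}) that $-\frac{1}{r-1}$ really does appear in $\sigma(P)$ for every $r \geq 2$ (so that the rational obstruction can be invoked), which is immediate from the factor $E_2 \otimes F_1$ being nonzero.
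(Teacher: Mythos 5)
Your proposal is correct and follows essentially the same route as the paper: identify $\sigma(P)=\{1,0,-\tfrac{1}{r-1}\}$ from the spectral decomposition, use Lemma~\ref{0724-2} to get full eigenvalue support, apply Theorem~\ref{0724-3} to force $T_\tau(-\tfrac{1}{r-1})=\pm 1$, and then invoke Lemmas~\ref{0724-4} and~\ref{0724-5} to conclude $-\tfrac{1}{r-1}\in\{\pm 1,\pm\tfrac12,0\}$, hence $r=2,3$. The only difference is that you spell out the final elimination step (negativity rules out $0$, $\tfrac12$, $1$) slightly more explicitly than the paper does.
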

\begin{proof}
Let $P$ be the discriminant of $\G_{r,m}$.
Equality~(\ref{0724-1}) implies that the distinct eigenvalues of $P$ are $1, 0 , -\frac{1}{r-1}$.
We have $\Theta_{P}(\BM{e}_x) = \sigma(P)$ by Lemma~\ref{0724-2}.
From Theorem~\ref{0724-3},
we have $T_{\tau}(-\frac{1}{r-1}) = \pm 1$ for some $\tau \in \MB{N}$.
Lemma~\ref{0724-4} derives $-\frac{1}{r-1} \in \cos \MB{Q} \pi$,
and $-\frac{1}{r-1}$ itself is a rational number.
By Lemma~\ref{0724-5}, we have $-\frac{1}{r-1} \in \{\pm 1, \pm \frac{1}{2} , 0 \}$.
This implies $r = 2,3$.
\end{proof}


From the above lemma,
it is sufficient to discuss each case of $r=2$ and $r=3$.
Before discussing the specific cases,
we state the following which is common to both.

\begin{lem} \label{0730-1}
{\it
With the above notation, if
\[ \left | \Span{ (I \otimes (F_1 - F_2)) \BM{e}_{v_{1}^{(1)}}, \BM{e}_{v_{i}^{(j)}}} \right | = 1, \]
then we have $m=2$ and $v_{i}^{(j)} = v_2^{(1)}$.
}
\end{lem}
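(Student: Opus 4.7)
The plan is to reduce the inner product to a concrete scalar via the tensor structure and then do a small case analysis. First I would use the identification $\BM{e}_{v_{i}^{(j)}} = \BM{e}_{j} \otimes \BM{e}_{i}$ (where $\BM{e}_j \in \MB{C}^r$ indexes the partite sets and $\BM{e}_i \in \MB{C}^m$ indexes within a partite set), so that the inner product in question factors as a product of two scalar inner products:
\[
\Span{(I \otimes (F_1 - F_2))\BM{e}_{v_1^{(1)}}, \BM{e}_{v_i^{(j)}}}
= \Span{\BM{e}_1,\BM{e}_j}\cdot\Span{(F_1-F_2)\BM{e}_1,\BM{e}_i}.
\]

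Next I would compute each factor explicitly. The first equals $\delta_{1,j}$, so if the modulus is to be $1$ we must have $j=1$. For the second factor, since $F_1 = \frac{1}{m}J_m$ and $F_2 = I_m - \frac{1}{m}J_m$, I get $F_1 - F_2 = \frac{2}{m}J_m - I_m$, whence
\[
\Span{(F_1-F_2)\BM{e}_1,\BM{e}_i} = \frac{2}{m} - \delta_{1,i}.
\]

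Finally I would split into the two cases $i=1$ and $i\ne 1$. If $i=1$, the condition $\left|\frac{2}{m}-1\right|=1$ forces $\frac{2}{m}\in\{0,2\}$, contradicting $m\geq 2$. If $i\ne 1$, the condition $\left|\frac{2}{m}\right|=1$ forces $m=2$, and then the only index $i\in\{1,2\}$ with $i\ne 1$ is $i=2$. Combining this with $j=1$ yields $v_i^{(j)} = v_2^{(1)}$, as required.

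There is no serious obstacle here; the statement is essentially a one-line computation dressed up in tensor notation, and the only thing to be careful about is not to confuse the two tensor factors (partite-set index versus within-partite index) when reading off the inner products and applying the constraint $m\geq 2$ in the case $i=1$.
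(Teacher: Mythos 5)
Your proposal is correct and follows essentially the same route as the paper: factor the inner product through the tensor decomposition $\BM{e}_{v_i^{(j)}} = \BM{e}_j \otimes \BM{e}_i$, reduce it to $\left|\delta_{1,j}\left(\frac{2}{m}-\delta_{1,i}\right)\right|$, and read off $j=1$, $i\neq 1$, $m=2$, $i=2$. Your case split on $i=1$ versus $i\neq 1$ just makes explicit the paper's remark that $i\neq 1$ follows from $m\geq 2$.
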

\begin{proof}
We note that $\BM{e}_{v_{i}^{(j)}} = \BM{e}_j \otimes \BM{e}_i$.
From the assumption, we obtain
\begin{align*}
1 &= \left | \Span{ (I \otimes (F_1 - F_2)) \BM{e}_{v_{1}^{(1)}}, \BM{e}_{v_{i}^{(j)}}} \right | \\
&= \left | \Span{ (I \otimes (F_1 - F_2)) (\BM{e}_1 \otimes \BM{e}_1), (\BM{e}_j \otimes \BM{e}_i)} \right | \\
&= \left | \Span{ (\BM{e}_1 \otimes (F_1 - F_2)\BM{e}_1), (\BM{e}_j \otimes \BM{e}_i)} \right | \\
&= \left | \BM{e}_1^{\top}\BM{e}_j \cdot ((F_1 - F_2)\BM{e}_1)^{\top} \BM{e}_i \right | \\
&= \left | \delta_{1,j} \left( \frac{2}{m} - \delta_{1,i} \right) \right |.
\end{align*}
The above equality implies $j=1$.
In addition, we have $i \neq 1$ since $m \geq 2$. 
We now have $\frac{2}{m} = 1$, i.e., $m=2$.
This implies that $i=2$, so we have $v_{i}^{(j)} = v_2^{(1)}$.
\end{proof}

\subsection{The case $r = 2$}
We discuss the graph $\G_{2,m}$.
Higuchi et al \cite{HKSS2017} have shown that
$\G_{2,m} = K_{m,m}$ is a periodic graph with period $4$,
i.e., $U(K_{m,m})^4 = I$.
Thus, we consider the time $\tau \in \{1,2,3\}$.
By the symmetry of $K_{m,m}$,
we may assume that the initial state is $d^* \BM{e}_{v_{1}^{(1)}}$ without loss of generality.

\begin{thm}
{\it
Perfect state transfer occurs from $d^* \BM{e}_{v_{1}^{(1)}}$ to another vertex type state on $\G_{2,m}$
at time $\tau \in \{1,2,3\}$
if and only if $m=2$ and $\tau = 2$.
In this case,
the state $d^* \BM{e}_{v_{1}^{(1)}}$ transfers to $d^* \BM{e}_{v_{2}^{(1)}}$.
}
\end{thm}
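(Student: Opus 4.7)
The plan is to combine Theorem~\ref{0724-3} with a direct computation of $T_{\tau}(P)$. For $r=2$, specializing Equality~(\ref{0724-1}) gives the spectral decomposition
\[
P = 1\cdot(E_1\otimes F_1) + (-1)\cdot(E_2\otimes F_1) + 0\cdot(I\otimes F_2),
\]
so $\sigma(P) = \{1,\,0,\,-1\}$, and by Lemma~\ref{0724-2} we have $\Theta_P(\BM{e}_{v_1^{(1)}}) = \sigma(P)$. Applying Theorem~\ref{0724-3}, any admissible time $\tau$ must satisfy $T_{\tau}(0) = \pm 1$ in particular. Since $T_1(0)=0$ and $T_3(0)=-0$, the values $\tau=1$ and $\tau=3$ are immediately excluded, while $T_2(\pm 1) = 1$ and $T_2(0) = -1$ show that $\tau=2$ is the only candidate left in $\{1,2,3\}$.

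Next I would compute $T_2(P)$ explicitly. By Lemma~\ref{0806-1} applied to the spectral decomposition above,
\[
T_2(P) = (E_1\otimes F_1) + (E_2\otimes F_1) - (I\otimes F_2) = I\otimes(F_1 - F_2),
\]
using $E_1 + E_2 = I_r$. Then, for any candidate target vertex $v_i^{(j)}$, the computation
\[
\Span{U^{2}d^{*}\BM{e}_{v_1^{(1)}},\,d^{*}\BM{e}_{v_i^{(j)}}} = \Span{T_2(P)\BM{e}_{v_1^{(1)}},\,\BM{e}_{v_i^{(j)}}} = \Span{(I\otimes(F_1-F_2))\BM{e}_{v_1^{(1)}},\,\BM{e}_{v_i^{(j)}}}
\]
follows from Lemma~\ref{0723-5} together with Equality~(\ref{0803-2}).

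Combining this with the criterion of Lemma~\ref{0820-2} for perfect state transfer, PST at time $\tau=2$ from $d^{*}\BM{e}_{v_1^{(1)}}$ to $d^{*}\BM{e}_{v_i^{(j)}}$ is equivalent to
\[
\bigl|\Span{(I\otimes(F_1-F_2))\BM{e}_{v_1^{(1)}},\,\BM{e}_{v_i^{(j)}}}\bigr| = 1.
\]
Now Lemma~\ref{0730-1} applies directly: the condition forces $m=2$ and $v_i^{(j)} = v_2^{(1)}$. Conversely, substituting $m=2$ and $v_i^{(j)} = v_2^{(1)}$ into the explicit inner product $|\delta_{1,j}(\tfrac{2}{m} - \delta_{1,i})|$ computed inside Lemma~\ref{0730-1} yields the value $1$, so the criterion of Lemma~\ref{0820-2} is met and the transfer really does occur.

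I do not expect a serious obstacle: Theorem~\ref{0724-3} already takes care of two of the three times for free, the spectral decomposition of $P(\G_{r,m})$ makes $T_2(P)$ into a clean tensor, and the combinatorial bookkeeping is absorbed by the preparatory Lemma~\ref{0730-1}. The only point that warrants care is keeping the two directions straight — Lemma~\ref{0730-1} is stated as a one-way implication, so I would remark explicitly on the converse by plugging in $m=2$ to confirm sufficiency.
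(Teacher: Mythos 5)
Your proposal is correct and follows essentially the same route as the paper: eliminate $\tau=1,3$ via Theorem~\ref{0724-3} applied to the eigenvalue $0$, compute $T_2(P)=I\otimes(F_1-F_2)$ from the spectral decomposition, reduce to the inner-product condition via Lemmas~\ref{0820-2} and~\ref{0723-5}, and invoke Lemma~\ref{0730-1}, with the converse checked by direct substitution. No gaps.
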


\begin{proof}
Suppose that perfect state transfer occurs from $d^* \BM{e}_{v_{1}^{(1)}}$ to $d^* \BM{e}_{v_{i}^{(j)}}$.
We first determine the time at which perfect state transfer can occur.
Let $P= P(\G_{2,m})$ and $U= U(\G_{2,m})$.
By Theorem~\ref{0724-3} and $0 \in \Theta_{P}(\BM{e}_{v_{1}^{(1)}})$,
we have $T_{\tau}(0) = \pm 1$, and hence $\tau = 2$.
In addition,
\begin{align*}
T_2(P) &= T_2(1 \cdot (E_1 \otimes F_1) + (-1) \cdot (E_2 \otimes F_1) + 0 \cdot (I \otimes F_2)) \tag{by (\ref{0724-1})} \\
&= (E_1 \otimes F_1) + (E_2 \otimes F_1) - (I \otimes F_2) \tag{by Lemma~\ref{0806-1}} \\
&= I \otimes (F_1 - F_2).
\end{align*}
Thus,
\begin{align*}
1 &= \left | \Span{ U^2 d^* \BM{e}_{v_{1}^{(1)}}, d^* \BM{e}_{v_{i}^{(j)}} } \right | \tag{by Lemma~\ref{0820-2}} \\
&= \left | \Span{ T_2(P) \BM{e}_{v_{1}^{(1)}}, \BM{e}_{v_{i}^{(j)}}} \right | \tag{by Lemma~\ref{0723-5}} \\
&= \left | \Span{ (I \otimes (F_1 - F_2)) \BM{e}_{v_{1}^{(1)}}, \BM{e}_{v_{i}^{(j)}}} \right |.
\end{align*}
By Lemma~\ref{0730-1}, we have $m=2$ and $v_{i}^{(j)} = v_2^{(1)}$.
Conversely,
we suppose that $m=2$ and $\tau = 2$.
Direct calculation derives $T_2(P) \BM{e}_{v_{1}^{(1)}} = \BM{e}_{v_{2}^{(1)}}$.
This implies that 
$\Span{ U^2 d^* \BM{e}_{v_{1}^{(1)}}, d^* \BM{e}_{v_{2}^{(1)}} } = 1$.
\end{proof}

We note that $\G_{2,2}$ is isomorphic to the 4-cycle $C_4$.
As explained in \cite{KSeYa},
if the degree of the terminus of an arc is 2,
the action of the time evolution matrix can be understood visually.
We supplement this.
Let $\G$ be a graph with $U = U(\G)$, and let $a \in \MC{A}(\G)$.
We denote by $\BM{e}_a$ the unit vector defined by $(\BM{e}_a)_z = \delta_{a,z}$.
If $\deg t(a) = 2$, then direct calculation derives $U \BM{e}_a = \BM{e}_b$,
where $b$ is the arc in $\{ z \in \MC{A}(\G) \mid t(a) = o(z) \} \setminus \{a^{-1}\}$.
This can be described visually as shown in Figure~\ref{0807-1}.

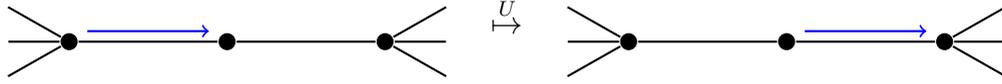
\begin{figure}[ht]
\begin{center}
\begin{tikzpicture}
[scale = 0.7,
line width = 0.8pt,
v/.style = {circle, fill = black, inner sep = 0.8mm},u/.style = {circle, fill = white, inner sep = 0.1mm}]
  \node[u] (1) at (-1.2, 0) {};
  \node[v] (2) at (0, 0) {};
  \node[v] (3) at (3, 0) {};
  \node[u] (4) at (-1.2, 0.68) {};
  \node[u] (5) at (-1.2, -0.68) {};
  \node[v] (6) at (6, 0) {};
  \node[u] (7) at (7.2, 0) {};
  \node[u] (8) at (7.2, 0.68) {};
  \node[u] (9) at (7.2, -0.68) {};
  \node[u] (10) at (0.3, 0.2) {};
  \node[u] (11) at (2.7, 0.2) {};
  \node[u] (12) at (3.3, 0.2) {};
  \node[u] (13) at (5.7, 0.2) {};
  \draw (1) to (2);
  \draw[-] (2) to (4);
  \draw[-] (2) to (3);
  \draw[-] (5) to (2);
  \draw[draw= blue,->] (10) to (11);
  \draw[-] (3) to (6);
  \draw[-] (6) to (7);
  \draw[-] (6) to (8);
  \draw[-] (6) to (9);
\end{tikzpicture}
\raisebox{6mm}{$\quad \overset{U}{\mapsto} \quad$}
\begin{tikzpicture}
[scale = 0.7,
line width = 0.8pt,
v/.style = {circle, fill = black, inner sep = 0.8mm},u/.style = {circle, fill = white, inner sep = 0.1mm}]
  \node[u] (1) at (-1.2, 0) {};
  \node[v] (2) at (0, 0) {};
  \node[v] (3) at (3, 0) {};
  \node[u] (4) at (-1.2, 0.68) {};
  \node[u] (5) at (-1.2, -0.68) {};
  \node[v] (6) at (6, 0) {};
  \node[u] (7) at (7.2, 0) {};
  \node[u] (8) at (7.2, 0.68) {};
  \node[u] (9) at (7.2, -0.68) {};
  \node[u] (10) at (0.3, 0.2) {};
  \node[u] (11) at (2.7, 0.2) {};
  \node[u] (12) at (3.3, 0.2) {};
  \node[u] (13) at (5.7, 0.2) {};
  \draw (4.5,0.7) node[blue]{};
  \draw (1) to (2);
  \draw[-] (2) to (4);
  \draw[-] (2) to (3);
  \draw[-] (5) to (2);
  \draw[draw= blue,->] (12) to (13);
  \draw[-] (3) to (6);
  \draw[-] (6) to (7);
  \draw[-] (6) to (8);
  \draw[-] (6) to (9);
\end{tikzpicture}
\caption{Action of $U$} \label{0807-1}
\end{center}
\end{figure}

Let us try to understand perfect state transfer on $\G_{2,2}$ with this view.
As shown in Figure~\ref{0807-2},
we can see that perfect state transfer occurs between vertex type states by acting $U$ twice.

\begin{figure}[ht]
\begin{center}
\begin{tikzpicture}
[scale = 0.7,
line width = 0.8pt,
v/.style = {circle, fill = black, inner sep = 0.8mm},u/.style = {circle, fill = white, inner sep = 0.1mm}]
  \node[v] (1) at (2, 0) {};
  \node[v] (2) at (0, 2) {};
  \node[v] (3) at (-2, 0) {};
  \node[v] (4) at (0, -2) {};
  \draw[-] (1) to (2);
  \draw[-] (2) to (3);
  \draw[-] (3) to (4);
  \draw[-] (4) to (1);
  \node[u] (11) at (2, 0.3) {};
  \node[u] (22) at (0.3, 2) {};
  \draw[draw= blue,->] (11) to (22);
  \node[u] (23) at (-0.3, 2) {};
  \node[u] (33) at (-2, 0.3) {};
  \draw[draw= red,->] (33) to (23);
\end{tikzpicture}
\raisebox{14mm}{$\quad \overset{U}{\mapsto} \quad$}
\begin{tikzpicture}
[scale = 0.7,
line width = 0.8pt,
v/.style = {circle, fill = black, inner sep = 0.8mm},u/.style = {circle, fill = white, inner sep = 0.1mm}]
  \node[v] (1) at (2, 0) {};
  \node[v] (2) at (0, 2) {};
  \node[v] (3) at (-2, 0) {};
  \node[v] (4) at (0, -2) {};
  \draw[-] (1) to (2);
  \draw[-] (2) to (3);
  \draw[-] (3) to (4);
  \draw[-] (4) to (1);
  \node[u] (11) at (2, 0.3) {};
  \node[u] (22) at (0.3, 2) {};
  \draw[draw= red,->] (22) to (11);
  \node[u] (23) at (-0.3, 2) {};
  \node[u] (33) at (-2, 0.3) {};
  \draw[draw= blue,->] (23) to (33);
\end{tikzpicture}
\raisebox{14mm}{$\quad \overset{U}{\mapsto} \quad$}
\begin{tikzpicture}
[scale = 0.7,
line width = 0.8pt,
v/.style = {circle, fill = black, inner sep = 0.8mm},u/.style = {circle, fill = white, inner sep = 0.1mm}]
  \node[v] (1) at (2, 0) {};
  \node[v] (2) at (0, 2) {};
  \node[v] (3) at (-2, 0) {};
  \node[v] (4) at (0, -2) {};
  \draw[-] (1) to (2);
  \draw[-] (2) to (3);
  \draw[-] (3) to (4);
  \draw[-] (4) to (1);
  \node[u] (33) at (-2, -0.3) {};
  \node[u] (44) at (-0.3, -2) {};
  \draw[draw= blue,->] (33) to (44);
  \node[u] (45) at (0.3, -2) {};
  \node[u] (11) at (2, -0.3) {};
  \draw[draw= red,->] (11) to (45);
\end{tikzpicture}
\caption{Perfect state transfer on $\G_{2,2}$} \label{0807-2}
\end{center}
\end{figure}
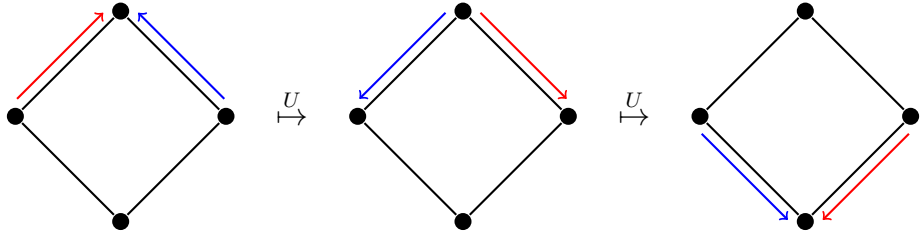

\subsection{The case $r = 3$}
We discuss the graph $\G_{3,m}$.
Remark that $\G_{3,m} = K_{m,m,m}$ is also a periodic graph with period $12$,
i.e., $U(K_{m,m,m})^{12} = I$, as shown in \cite{HKSS2017}.
Thus, we consider the time $\tau \in \{1,2, \dots, 11\}$.
As in the consideration of $r=2$,
we may assume that the initial state is $d^* \BM{e}_{v_{1}^{(1)}}$ without loss of generality.

\begin{thm}
{\it
Perfect state transfer occurs from $d^* \BM{e}_{v_{1}^{(1)}}$ to another vertex type state on $\G_{3,m}$
at time $\tau \in \{1,2, \dots, 11\}$
if and only if $m=2$ and $\tau = 6$.
In this case,
the state $d^* \BM{e}_{v_{1}^{(1)}}$ transfers to $d^* \BM{e}_{v_{2}^{(1)}}$.
}
\end{thm}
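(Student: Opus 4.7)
The plan is to mirror the structure of the $r=2$ proof, substituting the correct eigenvalue data for $\G_{3,m}$. By Equality~(\ref{0724-1}), the distinct eigenvalues of the discriminant $P = P(\G_{3,m})$ are $1$, $0$, and $-\tfrac{1}{2}$, and Lemma~\ref{0724-2} guarantees that each of these lies in $\Theta_{P}(\BM{e}_{v_{1}^{(1)}})$. So if perfect state transfer occurs at time $\tau$, Theorem~\ref{0724-3} forces $T_{\tau}(0) = \pm 1$ and $T_{\tau}(-\tfrac{1}{2}) = \pm 1$ simultaneously.

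Next I would translate these constraints into arithmetic conditions on $\tau$ via the identity $T_{\tau}(\cos \theta) = \cos(\tau \theta)$ from (\ref{0802-1}). Writing $0 = \cos(\pi/2)$ and $-\tfrac{1}{2} = \cos(2\pi/3)$, the condition $T_{\tau}(0) = \pm 1$ becomes $\cos(\tau \pi / 2) = \pm 1$, which forces $\tau$ to be even, while $T_{\tau}(-\tfrac{1}{2}) = \pm 1$ becomes $\cos(2\tau \pi / 3) = \pm 1$, which forces $\tau$ to be a multiple of $3$. Therefore $\tau$ must be a multiple of $6$, and the only such candidate in $\{1,2,\dots,11\}$ is $\tau = 6$. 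Evaluating $T_6$ at the spectral values gives $T_6(1) = 1$, $T_6(0) = -1$, $T_6(-\tfrac{1}{2}) = 1$, so Lemma~\ref{0806-1} applied to (\ref{0724-1}) yields
\begin{equation*}
T_6(P) = (E_1 \otimes F_1) + (E_2 \otimes F_1) - (I \otimes F_2) = I \otimes (F_1 - F_2).
\end{equation*}
This is precisely the operator that appeared in the $r=2$ analysis.

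From here the argument closes almost verbatim: the chain of Lemma~\ref{0820-2}, Lemma~\ref{0723-5}, and the computation above reduces $|\Span{U^{6} d^{*}\BM{e}_{v_{1}^{(1)}}, d^{*}\BM{e}_{v_{i}^{(j)}}}| = 1$ to $|\Span{(I \otimes (F_1 - F_2))\BM{e}_{v_{1}^{(1)}}, \BM{e}_{v_{i}^{(j)}}}| = 1$, at which point Lemma~\ref{0730-1} forces $m=2$ and identifies the target as $v_{2}^{(1)}$. For the converse I would verify by direct calculation that when $m=2$ one has $T_6(P) \BM{e}_{v_{1}^{(1)}} = \BM{e}_{v_{2}^{(1)}}$. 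No step looks genuinely hard; the only place requiring care is the simultaneous divisibility argument that pins $\tau$ down to $6$, and recognizing that $T_6(P)$ collapses to the same tensor form as in the $r=2$ case so that Lemma~\ref{0730-1} can be reused.
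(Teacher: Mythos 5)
Your proposal is correct and follows essentially the same route as the paper's own proof: the same use of Theorem~\ref{0724-3} with the eigenvalues $0$ and $-\tfrac{1}{2}$ to pin down $\tau = 6$ (the paper phrases the divisibility step as $\tau = 2s$ with $s \in 3\MB{Z}$, which is equivalent to your lcm argument), the same collapse of $T_6(P)$ to $I \otimes (F_1 - F_2)$, and the same appeal to Lemmas~\ref{0820-2}, \ref{0723-5}, and \ref{0730-1} plus a direct verification for the converse.
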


\begin{proof}
Suppose that perfect state transfer occurs from $d^* \BM{e}_{v_{1}^{(1)}}$ to $d^* \BM{e}_{v_{i}^{(j)}}$.
We first determine the time at which perfect state transfer can occur.
Let $P= P(\G_{3,m})$ and $U= U(\G_{3,m})$.
By Theorem~\ref{0724-3} and $0 \in \Theta_{P}(\BM{e}_{v_{1}^{(1)}})$,
we have $T_{\tau}(0) = \pm 1$, and hence $\tau$ is even.
Let $\tau = 2s$ for some $s \in \MB{N}$.
Since $-\frac{1}{2} \in \Theta_{P}(\BM{e}_{v_{1}^{(1)}})$,
we have $T_{\tau}(-\frac{1}{2}) = \pm 1$.
From Equality~(\ref{0802-1}),
$\pm 1 = T_{\tau}(-\frac{1}{2}) = \cos \frac{4s}{3}\pi$, which implies $s \in 3\MB{Z}$.
We now have $\tau \in 6\MB{Z}$, i.e., $\tau = 6$.
In addition,
\begin{align*}
T_6(P) &= T_6 \left( 1 \cdot (E_1 \otimes F_1) + \left( -\frac{1}{2} \right) \cdot (E_2 \otimes F_1) + 0 \cdot (I \otimes F_2) \right) \tag{by (\ref{0724-1})} \\
&= (E_1 \otimes F_1) + (E_2 \otimes F_1) - (I \otimes F_2) \tag{by Lemma~\ref{0806-1}} \\
&= I \otimes (F_1 - F_2).
\end{align*}
Thus,
\begin{align*}
1 &= \left | \Span{ U^6 d^* \BM{e}_{v_{1}^{(1)}}, d^* \BM{e}_{v_{i}^{(j)}} } \right | \tag{by Lemma~\ref{0820-2}} \\
&= \left | \Span{ T_6(P) \BM{e}_{v_{1}^{(1)}}, \BM{e}_{v_{i}^{(j)}}} \right | \tag{by Lemma~\ref{0723-5}} \\
&= \left | \Span{ (I \otimes (F_1 - F_2)) \BM{e}_{v_{1}^{(1)}}, \BM{e}_{v_{i}^{(j)}}} \right |.
\end{align*}
By Lemma~\ref{0730-1}, we have $m=2$ and $v_{i}^{(j)} = v_2^{(1)}$.
Conversely,
we suppose that $m=2$ and $\tau = 6$.
Direct calculation leads to $T_6(P) \BM{e}_{v_{1}^{(1)}} = \BM{e}_{v_{2}^{(1)}}$.
This implies that 
$\Span{ U^6 d^* \BM{e}_{v_{1}^{(1)}}, d^* \BM{e}_{v_{2}^{(1)}} } = 1$.
\end{proof}

\subsection{The complete graphs}
In this subsection,
we briefly supplement perfect state transfer in the complete graph.
Formally, the complete graph can be written as $\G_{r,1}$.
Since this graph also has the eigenvalue $-\frac{1}{r-1}$,
the same as in Lemma~\ref{0803-1} holds.
Namely, if perfect state transfer occurs between vertex type states on $\G_{r,1}$, then $r=2,3$.
It is easy to verify that perfect state transfer occurs on $\G_{2,1} = K_2$.
More precisely, $U(K_2) \BM{e}_{a} = \BM{e}_{a^{-1}}$ holds for any $a \in \MC{A}(K_2)$.
On the other hand,
perfect state transfer does not occur from a vertex type state to another one on $\G_{3,1} = K_3$.
This can be understood by the fact \cite{HKSS2017, KSeYa} that the period of $K_3$ is 3
and by visually observing dynamics of Grover walks.
We show in Figure~\ref{0809-1}.

\begin{figure}[ht]
\begin{center}
\begin{tikzpicture}
[scale = 0.6,
line width = 0.8pt,
v/.style = {circle, fill = black, inner sep = 0.8mm},u/.style = {circle, fill = white, inner sep = 0.1mm}]
  \node[v] (1) at (2, 0) {};
  \node[v] (2) at (0, 3.4) {};
  \node[v] (3) at (-2, 0) {};
  \draw[-] (1) to (2);
  \draw[-] (2) to (3);
  \draw[-] (3) to (1);
  \node[u] (11) at (2, 0.35) {};
  \node[u] (22) at (0.25, 3.3) {};
  \draw[draw= blue,->] (11) to (22);
  \node[u] (23) at (-0.25, 3.3) {};
  \node[u] (33) at (-2, 0.35) {};
  \draw[draw= red,->] (33) to (23);
\end{tikzpicture}
\raisebox{12mm}{$\quad \overset{U}{\mapsto} \quad$}
\begin{tikzpicture}
[scale = 0.6,
line width = 0.8pt,
v/.style = {circle, fill = black, inner sep = 0.8mm},u/.style = {circle, fill = white, inner sep = 0.1mm}]
  \node[v] (1) at (2, 0) {};
  \node[v] (2) at (0, 3.4) {};
  \node[v] (3) at (-2, 0) {};
  \draw[-] (1) to (2);
  \draw[-] (2) to (3);
  \draw[-] (3) to (1);
  \node[u] (11) at (2, 0.35) {};
  \node[u] (22) at (0.25, 3.3) {};
  \draw[draw= red,->] (22) to (11);
  \node[u] (23) at (-0.25, 3.3) {};
  \node[u] (33) at (-2, 0.35) {};
  \draw[draw= blue,->] (23) to (33);
\end{tikzpicture}
\raisebox{12mm}{$\quad \overset{U}{\mapsto} \quad$}
\begin{tikzpicture}
[scale = 0.6,
line width = 0.8pt,
v/.style = {circle, fill = black, inner sep = 0.8mm},u/.style = {circle, fill = white, inner sep = 0.1mm}]
  \node[v] (1) at (2, 0) {};
  \node[v] (2) at (0, 3.4) {};
  \node[v] (3) at (-2, 0) {};
  \draw[-] (1) to (2);
  \draw[-] (2) to (3);
  \draw[-] (3) to (1);
  \node[u] (33) at (-1.8, -0.3) {};
  \node[u] (11) at (1.8, -0.3) {};
  \draw[draw= blue,->] (33) to (11);
  \node[u] (332) at (-1.8, -0.55) {};
  \node[u] (112) at (1.8, -0.55) {};
  \draw[draw= red,->] (112) to (332);
\end{tikzpicture}
\raisebox{12mm}{$\quad \overset{U}{\mapsto} \quad$}
\begin{tikzpicture}
[scale = 0.6,
line width = 0.8pt,
v/.style = {circle, fill = black, inner sep = 0.8mm},u/.style = {circle, fill = white, inner sep = 0.1mm}]
  \node[v] (1) at (2, 0) {};
  \node[v] (2) at (0, 3.4) {};
  \node[v] (3) at (-2, 0) {};
  \draw[-] (1) to (2);
  \draw[-] (2) to (3);
  \draw[-] (3) to (1);
  \node[u] (11) at (2, 0.35) {};
  \node[u] (22) at (0.25, 3.3) {};
  \draw[draw= blue,->] (11) to (22);
  \node[u] (23) at (-0.25, 3.3) {};
  \node[u] (33) at (-2, 0.35) {};
  \draw[draw= red,->] (33) to (23);
\end{tikzpicture}
\caption{Perfect state transfer does not occur between distinct vertex type states on $K_3$} \label{0809-1}
\end{center}
\end{figure}
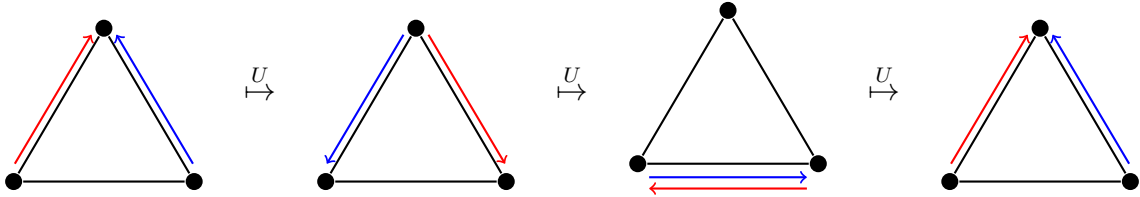

\section*{Acknowledgements}
S.K. is supported by JSPS KAKENHI (Grant No. 20J01175).
E.S. acknowledges financial supports from the Grant-in-Aid of Scientific Research (C) Japan Society
for the Promotion of Science (Grant No. 19K03616) and Research Origin for Dressed Photon.

\end{document}